\numberwithin{equation}{section}
\theoremstyle{plain}
  \newtheorem{theorem}{Theorem}[section]
  \newtheorem{lemma}{Lemma}[section]
\theoremstyle{definition}
  \newtheorem{definition}{Definition}[section]
\theoremstyle{remark}
\newcommand{\beq}{\begin{equation}}
\newcommand{\eeq}{\end{equation}}
\newcommand{\beqs}{\begin{eqnarray*}}
\newcommand{\eeqs}{\end{eqnarray*}}
\newcommand{\beqn}{\begin{eqnarray}}
\newcommand{\eeqn}{\end{eqnarray}}
\newcommand{\beqa}{\begin{array}}
\newcommand{\eeqa}{\end{array}}
\def\R{\mathbb R}
\begin{document}
\title{the hadamard variational formula for riesz capacity and its applications}

\author{Lu Zhang}
\address{School of Mathematics, Hunan University, Changsha, 410082, China}
\email{luzhang@hnu.edu.cn}

\subjclass[2010]{49Q10 $\cdot$ 35S15 $\cdot$ 35N25}
\keywords{Hadamard variational formula, Riesz capacity, Symmetry, Brunn-Minkowski inequality, Overdetermined problem.}

\maketitle

\maketitle

\baselineskip18pt

\parskip3pt
\begin{abstract}

In this paper, our focus lies on a fundamental geometric invariant known as Riesz capacity, which holds an essential position in potential theory. We establish the Hadamard variational formula for Riesz capacity of convex bodies. As a meaningful application, we derive a Serrin-type symmetry result for an overdetermined problem.

\end{abstract}


\section{Introduction}
The classical Minkowski problem is a core area in convex geometric analysis, which is  to find a convex body whose surface area measure equals a given spherical Borel measure.
Minkowski \cite{MH1897,MH1903} was the first to pose and solve the Minkowski problem when the given measure is either discrete or has a continuous density. Aleksandrov \cite{A1938} and Fenchel-Jessen \cite{FJ1938} independently solved the problem for arbitrary measures in the 1930s.
In particular, Aleksandrov  \cite{A1938} proved that surface area measure is the differential of volume and considered  the existence of minimizers of a related minimization problem to solve the problem.
Taking the place of volume, Jerison \cite{JD1996} considered another fundamental geometric invariant, electrostatic capacity. He established the Hadamard variational formula for capacity and then solved the capacitary Minkowski problem through a analogous related minimization problem. Subsequently, Colesanti et.al\cite{CNSXYZ2015} nontrivially extended Jerison's work on electrostatic capacity to $p$-capacity.
From the above works, it can be seen that the Hadamard variational formula plays a crucial role in solving the Minkowski type problem.

 In this paper, we are concerned with another significant geometric invariant, Riesz capacity, which plays a crucial role in potential theory in \cite{L1972}.

Let $\Omega\subset \R^{n}$ be a compact set and $\alpha\in (0,n)$, the $\alpha$-Riesz potential energy of $\Omega$ is defined by
\begin{equation*}
  I_{\alpha}(\Omega)=\text{inf} \Big \{\int_{\R^{n}}\int_{\R^{n}}\frac{1}{|x-y|^{n-\alpha}}d\mu(x)d\mu(y): \text{supp}(\mu)=\Omega, \mu(\Omega)=1 \Big \}.
\end{equation*}
 The infimum in the definition of $I_{\alpha}(\Omega)$ can be achieved by a Radon measure $\mu_{\Omega}$ in \cite{L1972}.
The Riesz potential of $\Omega$ is defined as
\begin{equation*}
  V(x)=\int_{\R^{n}}\frac{1}{|x-y|^{n-\alpha}} d\mu_{\Omega}(y),
\end{equation*}
and then
\begin{equation*}
  I_{\alpha}(\Omega)=\int_{\R^{n}} V(x)d\mu_{\Omega}(x).
\end{equation*}
It can be verified from \cite{GNB2015} that the Riesz potential $V$ satisfies
\begin{equation*}
  (-\Delta)^{\alpha/2}V=c_{n,\alpha}\mu_{\Omega},
\end{equation*}
in the distributional sense, where $(-\Delta)^{s}$ stands for the $s$-fractional Laplacian and
$c_{n,\alpha}$ is a positive constant depending only on $n$ and $\alpha$. Furthermore, according to \cite{NR2015}, the function $\frac{V}{I_{\alpha}(\Omega)}$ solves the following equations
\begin{equation*}
\left\{
\begin{aligned}
& (-\Delta)^{\alpha/2}u=0, \ \ \ \ \ \ \ \ \ \ \ in \ \ \R^{n}\backslash \Omega,  \\
&  u=1  , \ \ \ \ \ \ \ \ \ \ \ \ \ \ \ \ \ \ \
 \  \ \ in \ \ \Omega,  \\
&  \lim\limits_{|x|\rightarrow \infty}u(x)=0.
\end{aligned}
\right.
\end{equation*}

From \cite{L1972}, the $\alpha$-Riesz capacity is defined by
\begin{equation*}
  \text{Cap}_{\alpha}(\Omega)=\frac{c_{n,\alpha}}{I_{\alpha}(\Omega)}.
\end{equation*}
It is important to note that the definition of $\alpha$-Riesz capacity is not unique.  When $\alpha \in(0,2)$, there exists an equivalent definition (see section \ref{sec2}):
\begin{equation}\label{a1}
\text{Cap}_{\alpha}(\Omega)=\text{inf}\{\|u\|_{\mathring{H}^{\alpha/2}(\R^{n})}^{2}: u\in C_{c}^{\infty}(\R^n),u\geq \chi_{\Omega}\},
\end{equation}
where $\chi_{\Omega}$ is the characteristic function of the set $\Omega$ and
\begin{equation*}
\|u\|_{\mathring{H}^{\alpha/2}(\R^{n})}^{2}=\int_{\R^{n}}\int_{\R^{n}}\frac{|(u(x)-u(y))|^{2}}{|x-y|^{n+\alpha}}dxdy.
\end{equation*}

The connection with the classical electrostatic(Newtonian) capacity, defined as
\begin{equation*}
\text{Cap}_{2}(\Omega)=\text{inf}\{\|\nabla u\|_{L^{2}(\R^{n})}^{2}: u\in C_{c}^{\infty}(\R^n),u\geq \chi_{\Omega}\},
\end{equation*}
is established by the equality
\begin{equation*}
\text{Cap}_{1}(\Omega)=\text{Cap}_{2}(\Omega\times \{0\}),
\end{equation*}
for any compact set $\Omega\subset \R^{n}$,  for example see \cite{QZ2023}.

There have been few works  on the shape optimization problems involving the fractional Laplacian.
Dalibard and G\'{e}rard-Varet \cite{DG2013} considered the energy $J(\Omega)$ associated with the solution $u_{\Omega}$ of basic Dirichlet problem $(-\Delta)^{1/2}u=1$ in $\Omega$, $u=0$ in $\R^{n}\setminus \Omega$. They obtained the Hadamard variational formula for $J(\Omega)$ with a bounded open set $\Omega\subset \R^{2}$ of class $C^{\infty}$. Subsequently, Djitte, Fall and Weth \cite{DF2021} derived a shape derivative formula for the family of principle Dirichlet eigenvalues $\lambda_{s,p}(\Omega)$ with a bounded open set $\Omega \subset \R^{n}$ of class $C^{1,1}$. Recently,
Muratov, Novaga and Ruffini \cite{MNR2022} computed the variational formula for 1-Riesz capacity with a $C^{2}$ bounded set $\Omega\subset \R^{2}$.

To obtain the variational formula for Riesz capacity, we compute the first variation of an auxiliary functional
and establish its correlation with Riesz capacity.
 By leveraging some estimates for crucial maps as discussed in  \cite{DF2021}, we are able to address the higher-dimensional case through meticulous calculations.
 Subsequently, utilizing the connection between the auxiliary functional and Riesz capacity, we derive the Hadamard variational formula
for $\alpha$-Riesz capacity of convex bodies in $\R^{n}$ under smooth conditions
for $\alpha\in (0,2)$.
The variational formula for Riesz capacity is established in the following theorem.
\begin{theorem}\label{MT1}
Let $\Omega$ and $L$ be two convex bodies of class $C^{2}$ with support functions $h_{\Omega}$ and $h_{L}$ respectively.
Let $\Omega_{t}$ be the Wulff shape of  $h_{t}=h_{\Omega}+th_{L}$ and $\nu$ be the Gauss map of  $\Omega$. Suppose $u_{\Omega}$ is the minimizer of $Cap_{\alpha}(\Omega)$ in \eqref{a1}, then we have
\begin{equation*}
\frac{d}{dt} \mathrm{Cap}_{\alpha}(\Omega_{t})\Big{|}_{t=0} =c_{0}\int_{\partial \Omega}|\partial _{\nu}^{\alpha/2} u_{\Omega}(\sigma)|^{2}h_{L}(\nu_{\Omega}(\sigma))d\mathcal{H}^{n-1}(\sigma),
\end{equation*}
where $c_{0}=c_{\alpha/2}a_{n,\alpha/2}c_{n,\alpha/2}$ is a positive constant and $\partial _{\nu}^{\alpha/2}u_{\Omega} $ plays the role of an fractional normal derivative.
\end{theorem}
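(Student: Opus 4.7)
The plan is to adapt the Jerison--Colesanti strategy to the non-local setting, following the shape-derivative methodology of Djitte--Fall--Weth. The key idea is to exploit the variational characterization \eqref{a1} of $\text{Cap}_\alpha$ and deform $\Omega$ to $\Omega_t$ via a smooth family of diffeomorphisms $\Phi_t:\R^n\to\R^n$ with $\Phi_0=\mathrm{id}$ and $\Phi_t(\Omega)=\Omega_t$. The initial velocity field $X:=\tfrac{d}{dt}\big|_{t=0}\Phi_t$ can be chosen so that $X\cdot\nu_\Omega=h_L(\nu_\Omega)$ on $\partial\Omega$, using the standard identification of support-function perturbations with normal boundary velocities for Wulff shapes of $C^2$ convex bodies.

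Since $u_\Omega\equiv 1$ on $\Omega$, the pulled-back function $v_t:=u_\Omega\circ\Phi_t^{-1}$ equals $1$ on $\Omega_t$ and is therefore admissible in \eqref{a1} for $\Omega_t$. Introduce the auxiliary functional $J(t):=\|v_t\|_{\mathring H^{\alpha/2}}^2$. By the minimality of $u_\Omega$ at $t=0$, an envelope-type argument yields $\tfrac{d}{dt}\text{Cap}_\alpha(\Omega_t)\big|_{t=0}=J'(0)$. Changing variables via $\xi=\Phi_t^{-1}(x)$, $\eta=\Phi_t^{-1}(y)$, one has
\begin{equation*}
J(t)=\int_{\R^n}\!\!\int_{\R^n}\frac{|u_\Omega(\xi)-u_\Omega(\eta)|^2}{|\Phi_t(\xi)-\Phi_t(\eta)|^{n+\alpha}}\,|\det D\Phi_t(\xi)|\,|\det D\Phi_t(\eta)|\,d\xi\,d\eta,
\end{equation*}
and $J'(0)$ is computed by expanding the Jacobians and the deformed kernel to first order in $t$; the result is a double integral against the Gagliardo kernel of $u_\Omega$ with factors coming from $X$, $\mathrm{div}\,X$, and $DX$.

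To reach the stated boundary formula, I would convert this bulk expression into a surface integral on $\partial\Omega$ by means of a fractional Pohozaev/Green identity of Ros-Oton--Serra type, tailored to $u_\Omega$. Exploiting that $u_\Omega$ is $\alpha/2$-harmonic outside $\Omega$ and equals $1$ on $\Omega$, together with the boundary expansion $u_\Omega(x)\sim\partial_\nu^{\alpha/2}u_\Omega(\sigma)\cdot\delta(x)^{\alpha/2}$ as $x\to\sigma\in\partial\Omega$ (with $\delta$ the signed distance to $\partial\Omega$), the bulk integral collapses to a boundary integral of $|\partial_\nu^{\alpha/2}u_\Omega|^2$ against the normal velocity $X\cdot\nu=h_L(\nu_\Omega)$. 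The constant $c_0=c_{\alpha/2}\,a_{n,\alpha/2}\,c_{n,\alpha/2}$ then appears naturally from the normalization factors relating the Gagliardo seminorm, the pointwise operator $(-\Delta)^{\alpha/2}$, and the fractional normal derivative convention.

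The main obstacle is rigorously justifying the differentiation under the double integral: the kernel $|\xi-\eta|^{-(n+\alpha)}$ is singular on the diagonal, while $u_\Omega$ has only H\"older regularity up to $\partial\Omega$ (at best $C^{\alpha/2}$). This is precisely where the estimates of Djitte--Fall--Weth for the associated transport and remainder maps become indispensable; one must split the integration region into near-diagonal, near-boundary, and far-field pieces, bound each uniformly in $t$, and exploit cancellations between the Jacobian expansion and the kernel expansion. A secondary bookkeeping challenge is tracing the precise normalization constants that combine to form $c_0$, since the three equivalent formulations of the energy (via the Riesz kernel, the Gagliardo seminorm, and the Caffarelli--Silvestre extension) each introduce their own constant that must be matched carefully.
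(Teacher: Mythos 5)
Your proposal takes a genuinely different route from the paper. The paper never pulls back the Gagliardo seminorm under $\Phi_t$: it first reduces the capacity to an auxiliary exterior Dirichlet functional by choosing $\varphi\in C_c^{\infty}(\R^n)$ with $\varphi\equiv 1$ near $\Omega$, setting $f=-(-\Delta)^{\alpha/2}\varphi$ and writing $\tfrac12\mathrm{Cap}_{\alpha}(\Omega)=\tfrac12\|\varphi\|_{\mathring{H}^{\alpha/2}}^{2}+J_{f}(\Omega^{c})$ (Theorem \ref{A6}); then, in Theorem \ref{A1}, it compares the two minimizers $u$ and $u_t$ directly, reduces the difference quotient to an integral of $u\,(-\Delta)^{\alpha/2}u_t$ over the thin shell between the domains, and evaluates the limit in the tubular coordinates $\Psi,\Psi_{\sigma}$ of Djitte--Fall--Weth, using the boundary expansions $u=\psi\delta^{s}$, $u_t=\psi_t\delta_t^{s}$ together with the uniform convergence $\psi_t\to\psi$ established in Lemma \ref{A7}. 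Your plan (inner variation of the transported energy plus a Pohozaev/Green identity) is a coherent alternative in principle, but as written it has two genuine gaps.

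First, the ``envelope-type argument'' is only one-sided: admissibility of $v_t=u_{\Omega}\circ\Phi_t^{-1}$ gives $\mathrm{Cap}_{\alpha}(\Omega_t)\le J(t)$ with equality at $t=0$, hence only $\limsup_{t\to 0^{+}}t^{-1}\bigl(\mathrm{Cap}_{\alpha}(\Omega_t)-\mathrm{Cap}_{\alpha}(\Omega)\bigr)\le J'(0)$ and the mirror bound for $t<0$. To prove that the derivative exists and equals $J'(0)$ you must also transport the perturbed minimizers $u_{\Omega_t}$ back to $\Omega$, and this requires uniform (in $t$) boundary regularity and convergence of $u_{\Omega_t}$ and of the quotients $u_{\Omega_t}/\delta_t^{s}$ as $\Omega_t\to\Omega$ --- exactly the compactness content of the paper's Lemma \ref{A7} (via Ros-Oton--Serra estimates, extension, Arzel\`a--Ascoli and $\Gamma$-convergence), which your proposal neither states nor replaces. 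Second, the decisive step --- converting the bulk first variation $J'(0)$ into $\int_{\partial\Omega}\bigl|\partial_{\nu}^{\alpha/2}u_{\Omega}\bigr|^{2}h_{L}(\nu)\,d\mathcal{H}^{n-1}$ ``by a fractional Pohozaev/Green identity of Ros-Oton--Serra type, tailored to $u_{\Omega}$'' --- is not available off the shelf: the known identities are proved for bounded domains and for solutions of $(-\Delta)^{s}u=f$ in $\Omega$ with $u=0$ outside, whereas $u_{\Omega}$ solves an exterior problem on the unbounded set $\R^{n}\setminus\overline{\Omega}$, equals $1$ on $\Omega$, and only decays at infinity; one must first perform the cutoff reduction (as the paper does) and then actually derive such an identity in the exterior, unbounded setting. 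Since the boundary term $(u/\delta^{s})^{2}\,X\cdot\nu$ with its constant is precisely what the theorem asserts, invoking an unproved ``tailored'' identity at this point comes close to assuming the conclusion; the boundary-layer analysis near $\partial\Omega$ (which the paper carries out explicitly with the maps $\Psi,\Psi_{\sigma}$, the splitting into $A$ and $B$, and dominated convergence) is the real content and is left undone. By contrast, the differentiation under the integral sign for $J(t)$, which you single out as the main obstacle, is comparatively harmless, since $X$ is Lipschitz and the differentiated kernel is dominated by the Gagliardo kernel of $u_{\Omega}$.
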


In the field of convex geometry,
it is natural to define $\alpha$-Riesz capacitary measure $\mu_{Cap_{\alpha}}(\Omega,\cdot)$ by
\begin{equation*}
\mu_{Cap_{\alpha}}(\Omega, \eta)=\int_{\nu^{-1}(\eta)}|\partial_{\nu}^{\alpha/2}u(x)|^{2}d\mathcal{H}^{n-1}(x),\ \ \ \ \text{for}\ \text{any}\ \text{Borel}\ \text{set}\ \eta\subset S^{n-1},
\end{equation*}
and pose the Minkowski problem for $\alpha$-Riesz capacity:
 let $\mu$ be a finite Borel measure on the unit sphere $S^{n-1}$, what are necessary and sufficient conditions for the existence of a convex body $\Omega$ satisfying
\begin{equation*}
  \mu=\mu_{Cap_{\alpha}}(\Omega,\cdot)?
\end{equation*}
The Minkowski problem for $\alpha$-Riesz capacitary measure is interesting. However,
we cannot solve the Minkowski problem like \cite{JD1996},
due to a strong dependency on the weak continuity
of the measure $\mu_{Cap_{\alpha}}(\Omega, \cdot)$.


As a meaningful application of Theorem \ref{MT1},  we deduce an overdetermined problem for Riesz capacity and then prove Serrin type symmetry result for our overdetermined problems.

The study of overdetermined problems has a long history. In the 1970s, Serrin\cite{S1971} researched the following overdetermined problem
\begin{equation*}
\left\{
\begin{aligned}
&\Delta u +1=0\ \ \ \ \ \ \ \ \ \ \ \  \ \  \ \ \ in \ \ \Omega,\\
&u=0  \ \ \ \ \ \ \ \ \ \ \ \ \ \ \ \ \ \ \ \ \  \ \ \ \ on  \ \partial \Omega,\\
& \frac{\partial u}{\partial \nu}=const\ \ \ \ \ \ \ \ \ \ \ \ \ \ \ \ \ on\ \partial\Omega.
\end{aligned}
\right.
\end{equation*}
 The author used the way of moving parallel planes and proved that the domain  is a ball when $\Omega$ is a $ C^{2}$ domain. The Serrin-type problem on the interior domain has a lot results, for example, see \cite{W1971,CH1998,PW1989,BNS2008,S2012}.
Additionally, extensions of Serrin's result have also been given to other domains such as exterior domains (see \cite{AB1998,GS1999,MR2000,R1997,S2001}) and annular domains (see \cite{A1992,S2001,WGS1994,P1990,R1995,P1989}).

The generalization of Serrin's result to fractional Laplacian operator has been studied.
Fall and Jarohs \cite{FJ2015} considered the following fractional Laplacian equation
\begin{equation*}
\left\{
\begin{aligned}
&(-\Delta)^{s} u=f(u)\ \ \ \  \ \ in \ \ \Omega,\\
&u=0  \ \ \ \ \ \ \ \  \ \ \ \ \ \ \ \ \ \ \ in  \ \ \R^{n}\backslash \Omega,\\
& \partial_{\nu}^{s}u=const\ \ \ \ \ \ \ \ \ \ on\ \partial \Omega,\\
\end{aligned}
\right.
\end{equation*}
and obtained the radial symmetry of domain for locally Lipschitz function $f$.
In particular, the case for $n=2, s=1/2$ and $f=1$ was solved in \cite{DG2013}. Afterwards,
Soave and Valdinoci\cite{SV2019} extended the Serrin type problem to the fractional Laplacian equation on an exterior set of a multiply connected domain
and got that the domain is a ball and the solution is radially symmetric.
 Some related results, we refer to Li and Li\cite{LL2016}.


Besides that,
Fragal\`a \cite{F2012} generalized the Serrin's result to the Dirichlet problem with an additional
Neumann boundary condition. He used the Hadamard variational formula for functionals associated
equations and the Brunn-Minkowki inequality to obtained the symmetry result.
Inspired by her work, there were other analogous results. Dai and Shi\cite{DS2013}
deduced the Hadamard variational formula for Hessian eigenvalue function
and an overdetermined problem, and then obtained the symmetric result for their
overdetermined problem. Later, Huang, Song and Xu\cite{HSX2018} got the Hadamard
variational formula for $p$-torsion and $p$-eigenvalue and then deduced and
proved  the symmetric result for related overdetermined problem. Recently,
the overdetermined problem for $p$-capacity was also solved by Ji\cite{J2020}.

Our next result provides a characterization of constrained local minimum of $\text{Cap}_{\alpha}$ (see section \ref{sec4} for related definitions).
\begin{theorem}\label{A8}
Let $\Omega$ be a  bounded convex domain that contains the origin in its interior and $\partial\Omega \in C^{2}$. If $\Omega$ is a volume constrained local minimum for $\Omega\rightarrow \text{Cap}_{\alpha}(\Omega)$, then $\Omega$ is a ball.
\end{theorem}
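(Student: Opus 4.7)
The plan follows the Fragal\`a paradigm highlighted in the introduction: extract an overdetermined Neumann-type condition from the volume constraint via Theorem~\ref{MT1}, then promote it to radial symmetry through a Brunn--Minkowski inequality for $\alpha$-Riesz capacity.

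First, for any convex body $L$ of class $C^{2}$ with support function $h_{L}$, let $\Omega_{t}$ be the Wulff shape of $h_{\Omega}+th_{L}$. The standard first variation of volume gives $\frac{d}{dt}V(\Omega_{t})\big|_{t=0}=\int_{\partial\Omega}h_{L}(\nu)\,d\mathcal{H}^{n-1}$, while Theorem~\ref{MT1} supplies the corresponding formula for $\mathrm{Cap}_{\alpha}$. Since $\Omega$ is a volume-constrained local minimum and $L$ is arbitrary, the Lagrange multiplier principle produces $\lambda\in\R$ with
\[
c_{0}|\partial_{\nu}^{\alpha/2}u_{\Omega}(\sigma)|^{2}=\lambda\qquad\text{for }\mathcal{H}^{n-1}\text{-a.e. }\sigma\in\partial\Omega.
\]
Specializing to $L=\Omega$ and invoking the $(n-\alpha)$-homogeneity of $\mathrm{Cap}_{\alpha}$ together with $\int_{\partial\Omega}h_{\Omega}\,d\mathcal{H}^{n-1}=nV(\Omega)$ pins down $\lambda=(n-\alpha)\mathrm{Cap}_{\alpha}(\Omega)/(nV(\Omega))$.

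Second, I would invoke the Brunn--Minkowski inequality for $\alpha$-Riesz capacity,
\[
\mathrm{Cap}_{\alpha}((1-t)K+tL)^{1/(n-\alpha)}\geq(1-t)\mathrm{Cap}_{\alpha}(K)^{1/(n-\alpha)}+t\,\mathrm{Cap}_{\alpha}(L)^{1/(n-\alpha)},
\]
valid for convex bodies $K,L$ and $t\in[0,1]$, with equality iff $K$ and $L$ are homothetic. Differentiating at $t=0$ and substituting Theorem~\ref{MT1} yields a Minkowski-type inequality
\[
c_{0}\int_{\partial\Omega}|\partial_{\nu}^{\alpha/2}u_{\Omega}|^{2}h_{L}\,d\mathcal{H}^{n-1}\geq(n-\alpha)\mathrm{Cap}_{\alpha}(\Omega)^{(n-\alpha-1)/(n-\alpha)}\mathrm{Cap}_{\alpha}(L)^{1/(n-\alpha)},
\]
with equality iff $\Omega$ and $L$ are homothetic. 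Feeding in the constancy of Step~1 turns this into a sharp inequality relating the mixed volume $V_{1}(\Omega,L)$, $V(\Omega)$, and the capacities $\mathrm{Cap}_{\alpha}(\Omega),\mathrm{Cap}_{\alpha}(L)$. Taking $L=B$ a ball of volume $V(\Omega)$ and chaining with the classical Minkowski mixed-volume inequality together with the isoperimetric inequality for Riesz capacity $\mathrm{Cap}_{\alpha}(B)\leq\mathrm{Cap}_{\alpha}(\Omega)$, one forces the Brunn--Minkowski inequality to saturate at the pair $(\Omega,B)$; the equality case then yields $\Omega$ homothetic to $B$, and since $V(\Omega)=V(B)$, $\Omega$ is itself a ball.

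The principal technical obstacle lies in the Brunn--Minkowski inequality for $\alpha$-Riesz capacity together with a sharp characterization of its equality case on convex bodies. For the Newtonian capacity ($\alpha=2$) this is Borell's classical theorem, and the $1$-Riesz case in $\R^{2}$ was treated in Muratov--Novaga--Ruffini~\cite{MNR2022}, but establishing it for the full range $\alpha\in(0,2)$ would require adapting Colesanti-type symmetrization-plus-variation arguments to the nonlocal setting, exploiting the $\mathring{H}^{\alpha/2}$-characterization~\eqref{a1} together with the fractional P\'olya--Szeg\H{o} rearrangement inequality. Should that rigidity analysis prove intractable, a viable fallback is to feed the overdetermined condition extracted in Step~1 directly into the Serrin-type moving-plane symmetry results of Fall--Jarohs~\cite{FJ2015} or Soave--Valdinoci~\cite{SV2019} for nonlocal exterior overdetermined problems, which directly conclude that $\Omega$ is a ball.
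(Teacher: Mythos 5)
Your Step~1 is exactly the paper's argument: introduce the multiplier, use the Hadamard formula of Theorem~\ref{MT1} together with the first variation of volume, and conclude from the arbitrariness of $L$ that $|\partial_{\nu}^{\alpha/2}u_{\Omega}|^{2}$ is constant on $\partial\Omega$. At that point the paper finishes in one line by invoking Theorem~1.1 of Soave--Valdinoci \cite{SV2019} (the Serrin-type rigidity for the fractional exterior overdetermined problem satisfied by the capacitary potential), which is precisely the \emph{fallback} you mention at the end, not your primary route. If you take that fallback, your proof coincides with the paper's and is complete.

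Your primary route, however, has two genuine gaps. First, as you yourself flag, the Brunn--Minkowski inequality for $\alpha$-Riesz capacity with a homothety characterization of equality is not available for the full range $\alpha\in(0,2)$; the paper only has it (and only uses it, in the proof of Theorem~\ref{MT3}) for $\alpha=1$, citing \cite{NR2015,MNR2018,QZ2023}, so building the proof of Theorem~\ref{A8} on it for general $\alpha$ assumes an unproved ingredient. Second, even granting that inequality, the saturation step fails as described. With $c_{0}|\partial_{\nu}^{\alpha/2}u_{\Omega}|^{2}=\lambda=(n-\alpha)\mathrm{Cap}_{\alpha}(\Omega)/(nV(\Omega))$, your Minkowski-type inequality with $L=B$, $|B|=|\Omega|$, becomes
\begin{equation*}
\frac{V_{1}(\Omega,B)}{V(\Omega)}\;\geq\;\Big(\frac{\mathrm{Cap}_{\alpha}(B)}{\mathrm{Cap}_{\alpha}(\Omega)}\Big)^{\frac{1}{n-\alpha}},
\end{equation*}
where $nV_{1}(\Omega,B)=\int_{\partial\Omega}h_{B}(\nu)\,d\mathcal{H}^{n-1}$. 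But Minkowski's first inequality gives $V_{1}(\Omega,B)\geq V(\Omega)$, i.e.\ the left side is $\geq 1$, while the capacitary isoperimetric inequality you invoke gives $\mathrm{Cap}_{\alpha}(B)\leq\mathrm{Cap}_{\alpha}(\Omega)$, i.e.\ the right side is $\leq 1$. The two bounds are therefore compatible without any equality being forced, so nothing in this chain makes the Brunn--Minkowski inequality saturate at $(\Omega,B)$, and the rigidity case is never triggered. (This is also why the paper reserves the Fragal\`a-type Brunn--Minkowski argument for the \emph{mean-width}-constrained problem, Theorem~\ref{MT3}, where Lemma~\ref{A5} applies, and treats the volume-constrained case via \cite{SV2019}.) To repair your write-up, promote the fallback to the main argument: the constancy of $|\partial_{\nu}^{\alpha/2}u_{\Omega}|$ on $\partial\Omega$, together with the exterior problem \eqref{b3} solved by $u_{\Omega}$, puts you exactly in the hypotheses of \cite{SV2019}, which yields that $\Omega$ is a ball.
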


Furthermore, we also study the following  overdetermined boundary value problem and
obtain the symmetry result by the variational formula for $\alpha$-Riesz capacity.
\begin{theorem}\label{MT3}
Let $\Omega$ be a bounded convex domain of class $C^{2}$ that contains the origin in its interior and $G(x)$ be
the Gauss curvature of $\partial \Omega$. If there exists a solution $u$  to the following overdetermined boundary value problem
\begin{equation*}
\left\{
\begin{aligned}
&(-\Delta)^{\frac{1}{2}}u=0\ \ \ \ \ \ \ \ \ \ \ \ \ \ \ \ \ \  \ \ in \ \ \ \R^{n}\setminus \bar{\Omega},\\
&u=1  \ \ \ \ \ \ \ \ \ \ \ \ \ \ \ \ \ \ \ \ \ \ \ \ \ \ \ \  \ on  \ \partial \Omega,\\
&\lim\limits_{|x|\rightarrow \infty}u(x)=0,\\
& |\partial_{\nu}^{\frac{1}{2}}u(x)|^{2}=cG(x)\ \ \ \ \ \ \ \ \ \ \ \ \ \ on\ \partial \Omega,
\end{aligned}
\right.
\end{equation*}
then $\Omega$ is a ball.
\end{theorem}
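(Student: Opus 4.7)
\emph{Strategy.} My plan is to follow the variational scheme of Fragal\`a \cite{F2012}: combine the Hadamard formula (Theorem \ref{MT1}) with the Brunn--Minkowski inequality for Riesz capacity to derive two opposite inequalities for the scale-invariant ratio $\mathrm{Cap}_1(\Omega)/M(\Omega)^{n-1}$, where $M(\Omega):=\int_{S^{n-1}} h_\Omega\, d\mathcal{H}^{n-1}$ is proportional to the mean width of $\Omega$. The two inequalities will force equality, and the rigidity of Minkowski's inequality then yields that $\Omega$ is a ball.

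\emph{Step 1: Insert the overdetermined condition.} First I would apply Theorem \ref{MT1} with $\alpha=1$ and use $|\partial_\nu^{1/2} u|^2 = cG$. Since $\partial\Omega \in C^2$ is strictly convex, the Gauss map $\nu:\partial\Omega\to S^{n-1}$ is a $C^{1}$ diffeomorphism with Jacobian $G$, so $\int_{\partial\Omega} h_L(\nu(x))\, G(x)\, d\mathcal{H}^{n-1}(x) = \int_{S^{n-1}} h_L\, d\mathcal{H}^{n-1} = M(L)$. The Hadamard formula thus collapses to $\tfrac{d}{dt}\mathrm{Cap}_1(\Omega_t)|_{t=0} = c_0 c\, M(L)$. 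Writing the mixed capacity $\widetilde{C}(A,L):=\tfrac{1}{n-1}\tfrac{d}{dt}\mathrm{Cap}_1(A+tL)|_{t=0}$ and invoking the $(n-1)$-homogeneity of $\mathrm{Cap}_1$ (which gives $\widetilde{C}(\Omega,\Omega)=\mathrm{Cap}_1(\Omega)$), the case $L=\Omega$ pins the constant as $\mathrm{Cap}_1(\Omega)/M(\Omega)$, so
\[
\widetilde{C}(\Omega,L) \;=\; \frac{\mathrm{Cap}_1(\Omega)}{M(\Omega)}\, M(L) \qquad \text{for every convex body }L.
\]

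\emph{Step 2: Two uses of Minkowski's inequality.} I next invoke the Brunn--Minkowski inequality for Riesz capacity, $\mathrm{Cap}_1((1-t)A+tB)^{1/(n-1)} \geq (1-t)\mathrm{Cap}_1(A)^{1/(n-1)} + t\mathrm{Cap}_1(B)^{1/(n-1)}$, with equality iff $A,B$ are homothetic; differentiating at $t=0$ yields the mixed Minkowski inequality $\widetilde{C}(A,L)^{n-1} \geq \mathrm{Cap}_1(A)^{n-2}\mathrm{Cap}_1(L)$ with the same rigidity. Applying this with $A=\Omega$, $L=B$ the unit ball and substituting $\widetilde{C}(\Omega,B) = \mathrm{Cap}_1(\Omega)|S^{n-1}|/M(\Omega)$ from Step 1 gives
\[
\frac{\mathrm{Cap}_1(\Omega)}{M(\Omega)^{n-1}} \;\geq\; \frac{\mathrm{Cap}_1(B)}{|S^{n-1}|^{n-1}}.
\]
For the reverse direction I apply the same Minkowski inequality with the roles swapped, $A=B$ and $L=\Omega$: by rotational symmetry $|\partial_\nu^{1/2} u_B|^2$ is constant on $\partial B$, so Theorem \ref{MT1} gives $\widetilde{C}(B,L) = \mathrm{Cap}_1(B)\,M(L)/|S^{n-1}|$, and choosing $L=\Omega$ yields
\[
\frac{\mathrm{Cap}_1(\Omega)}{M(\Omega)^{n-1}} \;\leq\; \frac{\mathrm{Cap}_1(B)}{|S^{n-1}|^{n-1}}.
\]
Combining the two forces equality; by the rigidity of Minkowski's inequality, $\Omega$ is homothetic to $B$, i.e., a ball.

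\emph{Main obstacle.} The hard part is the Brunn--Minkowski inequality for Riesz capacity together with its equality characterization, which is invoked at both base points $\Omega$ and $B$ above. A Borell-type argument or an adaptation of the $p$-capacity proof of Colesanti et al. \cite{CNSXYZ2015} should be the route, but the equality rigidity for Riesz capacity on $C^2$ convex bodies will need to be verified carefully; everything else is a direct algebraic consequence of the explicit form of the Hadamard formula supplied by Theorem \ref{MT1}.
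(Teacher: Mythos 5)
Your proposal is correct and follows the same overall scheme as the paper (Fragal\`a's variational method: Hadamard formula plus the Brunn--Minkowski inequality for $\mathrm{Cap}_1$), but the endgame is organized differently. The paper multiplies the overdetermined condition by $h_{\Omega}$ and by $h_{L}-h_{\Omega}$, which is exactly your ``pin the constant with $L=\Omega$'' step, and concludes that $\Omega$ is stationary for the ratio $\mathrm{Cap}_1^{1/(n-1)}/M$; it then invokes Fragal\`a's lemma (Lemma \ref{A5} of \cite{F2012}) as a black box, after checking that $\mathrm{Cap}_1$ is a Brunn--Minkowski functional of degree $n-1$. You instead inline the proof of that lemma: you derive the mixed Minkowski inequality $\widetilde{C}(A,L)^{n-1}\geq \mathrm{Cap}_1(A)^{n-2}\mathrm{Cap}_1(L)$ by differentiating the Brunn--Minkowski inequality, apply it once at $A=\Omega$ (using the overdetermined condition) and once at $A=B$ (using the Hadamard formula at the ball, i.e.\ a Urysohn-type inequality), and close with rigidity. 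That is a legitimate, more self-contained route; just note that the rigidity of the differentiated inequality does not come for free from the equality case of Brunn--Minkowski and needs the standard concavity argument (equality in the derivative inequality forces $t\mapsto \mathrm{Cap}_1((1-t)A+tL)^{1/(n-1)}$ to be affine, and then the equality case of Brunn--Minkowski applies at interior $t$). Finally, the ``main obstacle'' you flag is not actually open: the Brunn--Minkowski inequality for $1$-Riesz capacity and its equality characterization are already in the literature and are cited in the paper (\cite{NR2015} for the inequality, \cite{MNR2018} and \cite{QZ2023} for the equality cases), so no Borell-type or Colesanti-et-al.-type argument needs to be redone; with that citation in place your proof is complete and equivalent in strength to the paper's.
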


This article is organized as follows. In section \ref{sec2}, we provide some basic notations
related to convex bodies and introduce fundamental results concerning the
fractional Sobolev space, Riesz potential energy and Riesz capacity. In
section \ref{sec3}, we establish
 the Hadamard variational formula for $\alpha$-Riesz capacity in the smooth case for $n\geq2$.
In section \ref{sec4}, we utilize  the Hadamard variational formula to derive the symmetric results for the overdetermined boundary value problem.
\section{Preliminaries}
\label{sec2}

\subsection{Convex geometry}
In this subsection, we will provide some basics and notations about convex bodies. For more details,  Schneider's book \cite{S2014} is recommended for reference.

Let $\R^{n}$ be the $n$-dimensional Euclidean space. The standard inner product of the vectors $x,y \in \R^{n}$ is denoted as $x\cdot y$. The Lebesgue measure is denoted as $|\cdot|$. The volume of the unit sphere in $\R^{n}$ is denoted as $\omega_{n}$.

A compact convex set with non-empty interior is called  convex body. The set of all convex bodies in $\R^{n}$
will be written as $\mathcal{K}^{n}$. The set  $\mathcal{K}_{o}^{n}$ represents   convex bodies that contain the origin in their interiors.
The space $C^{2}(\R^{n})$ stands for all twice differentiable functions and the space $C_{c}^{\infty}(\R^{n})$ stands for all infinitely differentiable functions with compact supports in $\R^{n}$.

 Let $K$ and $L$ be two compact convex sets, the $Hausdorff$ $metric$ between $K$ and $L$ is defined as
\begin{equation*}
  d(K,L)=\mbox{min}\{t>0: K\in L+tB, L\in K+tB  \}.
\end{equation*}
 A sequence of compact convex sets $K_{i}$ converges to $K$ if  $d(K_{i},K)\rightarrow 0$, as $ i \rightarrow\infty$.

Let $K,L \in \mathcal{K}^{n}$ and $a,b\in \R^{+}$, the $Minkowski$ $combination$ is defined by
\begin{equation*}
  aK+bL=\{ax+by: x\in K, y\in L\}.
\end{equation*}

Let $K$ be a compact convex subset of $\R^{n}$, the $support function$ $h_{K}: \R^{n}\rightarrow \R$ of $K$ is defined by
\begin{equation*}
  h_{K}(x)=\mbox{max} \{x\cdot y: y\in K\}.
\end{equation*}
The support function $h_{K}$ is convex and homogeneous of degree 1.
Moreover, if $K\in \mathcal {K}_{o}^{n}$, then $h_{K}$ is positive and continuous.

Let $f$ be a positive continuous function on $S^{n-1}$, the $Wulff$ $shape$ $[f]$ is written as
\begin{equation*}
  [f]=\{x\in \R^{n}: x\cdot v \leq f(v),\ \forall\ v\in S^{n-1}\}.
\end{equation*}
 It is apparent that $h_{[f]} \leq f$ and $[h_{K}]=K$ for each $K\in \mathcal{K}_{o}^{n}$.

 Let $K\in \mathcal{K}_{o}^{n}$ and $x\in \R^{n}\setminus\{0\}$, the $radial$ $function$ $\rho_{K}(x): \R^{n}\rightarrow \R$  of $K$ is defined by
\begin{equation*}
  \rho_{K}(x)=\mbox{max}\{t\geq 0: tx \in K\}.
\end{equation*}
The radial function is homogeneous of degree -1. Then $\partial K=\{\rho_{K}(u)u: u\in S^{n-1}\}$.


\subsection{The fractional Sobolev space}
This subsection is devoted to the definition of the fractional Sobolev spaces.

Let $s\in (0,1)$, the fractional Sobolev space $H^{s}(\R^{n})$ is defined as follows
\[
H^{s}(\R^{n}):=\left\{ u\in L^2(\R^{n}):\ \frac{|u(x)-u(y)|^2}{|x-y|^{n+2s}} \in L^2(\R^{n} \times \R^{n})   \right\}.
\]
It is an Hilbert space, endowed with the natural norm
\[
\|u\|_{H^{s}(\R^{n})}^{2}:=  \int_{\R^{n}} |u|^2 dx+ \int_{\R^{n}}\int_{\R^{n}} \frac{|u(x)-u(y)|^2}{|x-y|^{n+2s}} dxdy ,
\]
the term
\[
\|u\|_{\mathring{H}^{s}(\R^{n})}^{2}:=\int_{\R^{n}}\int_{\R^{n}} \frac{|u(x)-u(y)|^2}{|x-y|^{n+2s}} dxdy
\]
is so-called Gagliardo (semi) norm of $u$.
The dual space to $\mathring{H}^{s}(\R^{n})$ is denoted $\mathring{H}^{-s}(\R^{n})$.

Let $\mathcal{S}$ be the Schwartz space of rapidly decreasing $C^{\infty}$ functions in $\R^n$. For any function $u\in \mathcal{S}$, the $s$-fractional Laplacian $(-\Delta)^{s}$ is defined by
\begin{equation}\label{F-S-1}
(-\Delta)^{s}u(x)=c_{n,s}P.V.\int_{\R^{n}}\frac{u(x)-u(y)}{|x-y|^{n+2s}}dy,
\end{equation}
where $P.V.$ stands for the Cauchy principle value and $c_{n,s}$ is a constant depending only on $n$ and $s$. In addition, this operator $(-\Delta)^{s}$ has alternate form
\[
(-\Delta)^{s}u(x)=\frac{1}{2}c_{n,s}\int_{\R^n} \frac{2u(x)-u(x+y)-u(x-y)}{|y|^{n+2s}}dy.
\]

Note that the operator $(-\Delta)^{s}u(x)$ is well-defined when $u\in C^{\beta+2s}(\R^{n})\cap L^{\infty}(\R^{n})$ for some $\beta\in (0,1)$. In fact, when $u\in L^{\infty}(\R^{n})$, then
\begin{equation*}
\begin{aligned}
&\Big{|}\int_{|y|>1} \frac{2u(x)-u(x+y)-u(x-y)}{|y|^{n+2s}}dy\Big{|}\leq 4\|u\|_{L^{\infty}(\R^{n})}\int_{|y|>1} \frac{1}{|y|^{n+2s}}dy<\infty.
\end{aligned}
\end{equation*}
When $u\in C^{\beta+2s}(\R^{n})$, then $|2u(x)-u(x+y)-u(x-y)|\leq C |y|^{\beta+2s}$ and
\begin{equation*}
\begin{aligned}
&\Big{|}\int_{|y|\leq 1} \frac{2u(x)-u(x+y)-u(x-y)}{|y|^{n+2s}}dy\Big{|}\leq C\int_{|y|\leq 1} \frac{1}{|y|^{n-\beta}}dy<\infty.
\end{aligned}
\end{equation*}

Moreover, note that in \cite {G} if $u\in C_c^{\infty}(\R^n)$, then $(-\Delta)^{s}u \in C^{\infty}(\R^n)$, but is not compactly supported and
\[
(-\Delta)^{s}u=O(|x|^{-(n+2s)}),\ \ \ \mathrm{as}\ |x|\rightarrow \infty.
\]


The basic symmetry of the operator $(-\Delta)^{s}$ in \cite {G} is that
\begin{equation}\label{aa7}
\int_{\R^{n}}u(x)(-\Delta)^{s}v(x)dx=\int_{\R^{n}}v(x)(-\Delta)^{s}u(x)dx,
\ \ \text{for}\ \text{any}\ u,v\in \mathcal{S}.
\end{equation}
It is also a fact in \cite{DPV2012} that
\begin{equation}\label{aa31}
\int_{\R^n} |(-\Delta)^{s/2}u(x)|^2 dx= \int_{\R^n} \int_{\R^n} \frac{|u(x)-u(y)|^2}{|x-y|^{n+2s}}dxdy.
\end{equation}

\subsection{The Riesz potential energy and Riesz capacity}
In this subsection, we list some elementary results about the Riesz potential energy and Riesz capacity.

For quick reference, we repeat the definition of $\alpha$-Riesz potential energy of a set $\Omega$,
\begin{equation*} I_{\alpha}(\Omega)=\text{inf} \Big \{\int_{\R^{n}}\int_{\R^{n}}\frac{1}{|x-y|^{n-\alpha}}d\mu(x)d\mu(y): \text{supp}(\mu)=\Omega, \mu(\Omega)=1 \Big \}.
\end{equation*}
 The infimum in the definition of $I_{\alpha}(\Omega)$ can be achieved by a Radon measure $\mu_{\Omega}$ when $\Omega$ is a compact set.
  The $\alpha$-Riesz capacity is defined by
\begin{equation}\label{b1}
  \text{Cap}_{\alpha}(\Omega)=\frac{c_{n,\alpha}}{I_{\alpha}(\Omega)}.
\end{equation}

 When $\alpha \in(0,2)$, there is an alternative definition of $\alpha$-Riesz capacity,
\begin{equation}\label{c4}
\text{Cap}_{\alpha}(\Omega)=\text{inf}\{\|u\|_{\mathring{H}^{\alpha/2}(\R^{n})}^{2}: u\in C_{c}^{\infty}(\R^n),u\geq \chi_{\Omega}\}.
\end{equation}

These two definitions are  equivalent. For the completeness of the content, we give the proof. It is a fact that the Riesz potential $V(x):=\int_{\R^{n}}\frac{1}{|x-y|^{n-\alpha}} d\mu_{\Omega}(y)$ satisfied
\begin{equation}\label{aa30}
  (-\Delta)^{\alpha/2}V=c_{n,\alpha}\mu_{\Omega},
\end{equation}
in the distributional sense in \cite{GNB2015}.
Since $\text{supp}(\mu_{\Omega})=\Omega$, then
  $(-\Delta)^{\alpha/2}V=0 $ in $\R^{n}\backslash \Omega$. Moreover,
 as indicated in \cite{NR2015}, the function $\frac{V}{I_{\alpha}(\Omega)}$ is the unique solution of the following equations
\begin{equation}\label{b3}
\left\{
\begin{aligned}
& (-\Delta)^{\alpha/2}u=0, \ \ \ \ \ \ \ \ \ \ \ \text{in} \ \ \R^{n}\backslash \Omega,  \\
&  u=1  , \ \ \ \ \ \ \ \ \ \ \ \ \ \ \ \ \ \ \ \ \  \ \text{in} \ \ \Omega,  \\
&  \lim\limits_{|x|\rightarrow \infty}u(x)=0.
\end{aligned}
\right.
\end{equation}
Thus, by \eqref{aa30} and \eqref{aa31}, we have
\begin{equation*}
I_{\alpha}(\Omega)=\int_{\R^{n}}V(x)d\mu_{\Omega}(x)=\frac{1}{c_{n,\alpha}}\int_{\R^{n}}V(x)(-\Delta)^{\alpha/2}Vdx=\frac{1}{c_{n,\alpha}}\|V\|_{\mathring{H}^{\frac{\alpha}{2}}(\R^{n})}^{2}.
\end{equation*}
Let $u_{\Omega}:=\frac{V}{I_{\alpha}(\Omega)}$, then arguing as in the proof of Theorem 11.16 in \cite{L2001} we have $u_{\Omega}$ is the minimizer of $\text{Cap}_{\alpha}(\Omega)$ and
\begin{equation*}
\text{Cap}_{\alpha}(\Omega)=\|u_{\Omega}\|_{\mathring{H}^{\frac{\alpha}{2}}(\R^{n})}^{2}=\frac{\|V\|_{\mathring{H}^{\frac{\alpha}{2}}(\R^{n})}^{2}}{I_{\alpha}(\Omega)^{2}}=\frac{c_{n,\alpha}}{I_{\alpha}(\Omega)}.
\end{equation*}
The proof of the equivalence of two definitions is completed.

\section{the hadamard variational formula for riesz capacity}
\label{sec3}
In this section, we establish the Hadamard variational formula for $\alpha$-Riesz capacity.  Initially, we compute the first variation of an auxiliary functional and show the relation with Riesz capacity. Obtaining the first variation of the auxiliary functional in the case of $n\geq2$ relies on careful estimates in \cite{DF2021}.
In particular, in the case of $n=2$ and $\alpha=1$, the variational formula was obtained in \cite{MNR2022}. For brevity, we write $s=\frac{\alpha}{2}$.
\subsection{The first variation of an auxiliary functional}
Let $\Omega$ be an open set of class $C^{2}$ in $\R^n$, not necessarily bounded, and $f\in  L^{\frac{2n}{n+2s}}(\R^n)$. We consider the infimum problem
\begin{equation*}
\text{inf} \{I_{\Omega,f}(v): v\in \mathring{H}^{s}(\R^n)\ \text{and} \ v|_{\Omega^{c}}=0\},
\end{equation*}
where the functional
$I_{\Omega,f}$ is defined by
\begin{equation*}
I_{\Omega,f}(v)=\left\{
\begin{aligned}
&\frac{1}{2}\|v\|_{\mathring{H}^{s}(\R^n)}^{2}-\int_{\R^n}fvdx,\ \ \text{if}\ v\in \mathring{H}^{s}(\R^n)\  \text{and} \ v|_{\Omega^{c}}=0,\\
&\infty,\ \ \ \ \ \ \ \ \ \ \ \  \ \ \ \ \ \ \ \ \ \ \ \ \ \ \ \ \   \text{otherwise}.
\end{aligned}
\right.
\end{equation*}

  Since the space $\mathring{H}^{s}(\R^n)$ is continuously embedded into $L^{\frac{2n}{n-2s}}(\R^n)$ (see theorem 6.5 of \cite{DPV2012}), then there exists a unique minimizer $u_{\Omega,f}\in \mathring{H}^{s}(\R^n)$ of $I_{\Omega,f}$ such that
 \begin{equation}\label{d1}
\left\{
\begin{aligned}
&(-\Delta)^{s}u_{\Omega,f}=f, \ \ \ \ \ \ \mathrm{in}\  \Omega,\\
 &u_{\Omega,f}=0,\ \ \ \ \ \ \ \ \ \ \ \ \ \ \ \mathrm{in}\  \Omega^{c}\\
\end{aligned}
\right.
 \end{equation}
 in the distributional sense, that is
 \begin{equation*}
 \int_{\R^n}u_{\Omega,f}(x)(-\Delta)^{s}\varphi(x)dx=\int_{\R^n}f(x)\varphi(x)dx,\ \ \ \ \text{for}\ \text{all}\ \varphi \in C_{c}^{\infty}(\R^n).
 \end{equation*}
 Moreover, when $u_{\Omega,f}|_{\Omega}\in C_{loc}^{\beta+2s}(\Omega)\cap L^{\infty}(\Omega)$ for some $\beta\in (0,1)$, then \eqref{d1} holds pointwise in $\Omega$.
Therefore, we have
 \begin{equation}\label{c3}
 J_{f}(\Omega):=\min I_{\Omega,f}=-\frac{1}{2}\int_{\Omega}u_{\Omega,f}(x)f(x)dx=-\frac{1}{2}\int_{\Omega}u_{\Omega,f}(x)(-\Delta)^{s}u_{\Omega,f}(x)dx.
 \end{equation}

In the following lemma, we prove the regularity theory for the Dirichlet problem \eqref{d1}. It shows that $u_{\Omega,f}|_{\Omega}\in C_{loc}^{\beta+2s}(\Omega)\cap L^{\infty}(\Omega)$ for some $\beta\in (0,1)$ when $f\in L^{\infty}(\R^{n})\cap L^{\frac{2n}{n+2s}}(\R^{n})$ and $f|_{\Omega}\in C_{loc}^{\beta}(\Omega)$. Thus, \eqref{d1} holds pointwise in $\Omega$.

 \begin{lemma}\label{A4}
Let $\Omega$ be an open set with boundary of class $C^{2}$ and $f\in L^{\infty}(\R^{n})\cap L^{\frac{2n}{n+2s}}(\R^{n})$. Let $u_{\Omega,f}$ be the minimizer of $I_{\Omega,f}$, then $u_{\Omega,f}\in {L^{\infty}(\R^n)}$.  Additionally, if $f|_{\Omega}\in C_{loc}^{\beta}(\Omega)$ for some $\beta\in (0,1)$, then $u_{\Omega,f}|_{\Omega}\in C_{loc}^{2s+\beta}(\Omega)$.
 \end{lemma}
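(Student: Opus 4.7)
The plan is to prove the two assertions separately using standard fractional elliptic regularity tools. The $L^{\infty}$ bound comes from a De~Giorgi--Stampacchia truncation argument applied directly to the Euler--Lagrange equation, and the interior H\"older regularity then follows from the well-known local Schauder estimates of Silvestre for the fractional Laplacian.

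\textbf{The $L^{\infty}$ bound.} For every $k>0$ the truncation $v_{k}:=(u_{\Omega,f}-k)^{+}$ vanishes on $\Omega^{c}$ and lies in $\mathring{H}^{s}(\R^{n})$; its superlevel sets $A(k)=\{u_{\Omega,f}>k\}$ have finite measure because $u_{\Omega,f}\in L^{2n/(n-2s)}(\R^{n})$ by the Sobolev embedding $\mathring{H}^{s}(\R^{n})\hookrightarrow L^{2n/(n-2s)}(\R^{n})$ (Theorem~6.5 of \cite{DPV2012}). Testing the weak formulation of \eqref{d1} against $v_{k}$ and using the pointwise identity
\[
(a-b)\bigl((a-k)^{+}-(b-k)^{+}\bigr)\ge \bigl((a-k)^{+}-(b-k)^{+}\bigr)^{2},\qquad a,b\in\R,
\]
converts the nonlocal quadratic form into $\|v_{k}\|_{\mathring{H}^{s}(\R^{n})}^{2}$. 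Combining this with Sobolev and H\"older on the right-hand side gives the standard iterative estimate
\[
(h-k)^{\frac{2n}{n-2s}}\,|A(h)|\le C\,\|f\|_{L^{\infty}(\R^{n})}^{\frac{2n}{n-2s}}\,|A(k)|^{\frac{n+2s}{n-2s}},\qquad h>k>0,
\]
whose exponent $(n+2s)/(n-2s)$ exceeds $1$, so Stampacchia's lemma yields $|A(k_{0})|=0$ for some finite $k_{0}$. Applying the same argument to $-u_{\Omega,f}$ produces the matching lower bound, and hence $\|u_{\Omega,f}\|_{L^{\infty}(\R^{n})}\le C(n,s)\bigl(\|f\|_{L^{\infty}}+\|f\|_{L^{2n/(n+2s)}}\bigr)$.

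\textbf{Interior H\"older regularity.} Once $u_{\Omega,f}\in L^{\infty}(\R^{n})$ is known, I would invoke the interior Schauder estimate of Silvestre: whenever $B_{2r}(x_{0})\subset\Omega$ and $f\in C^{\beta}(\overline{B_{2r}(x_{0})})$ with $2s+\beta\notin\mathbb{N}$, any bounded distributional solution of $(-\Delta)^{s}u=f$ in $B_{2r}(x_{0})$ satisfies
\[
\|u\|_{C^{2s+\beta}(B_{r}(x_{0}))}\le C\bigl(\|u\|_{L^{\infty}(\R^{n})}+\|f\|_{C^{\beta}(B_{2r}(x_{0}))}\bigr).
\]
Passing from the distributional form of \eqref{d1} to a pointwise equation inside $B_{2r}(x_{0})$ is a short bootstrap: Krylov--Safonov type estimates for the fractional Laplacian applied to the bounded $u_{\Omega,f}$ supply an initial H\"older exponent making the alternate singular-integral form of $(-\Delta)^{s}$ recorded after \eqref{F-S-1} well-defined, after which Silvestre's estimate upgrades the regularity up to $C^{2s+\beta}$. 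Covering $\Omega$ by such balls then gives $u_{\Omega,f}|_{\Omega}\in C^{2s+\beta}_{\mathrm{loc}}(\Omega)$, and if $2s+\beta$ happens to be an integer one simply replaces $\beta$ by a slightly smaller exponent.

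\textbf{Main obstacle.} The principal technicality is that $\Omega$ is permitted to be unbounded, so the superlevel sets in the Stampacchia iteration are not automatically of finite measure, as they would be in the textbook presentation on a bounded domain. Exploiting the global integrability $u_{\Omega,f}\in L^{2n/(n-2s)}(\R^{n})$ via $|A(k)|\le k^{-2n/(n-2s)}\|u_{\Omega,f}\|_{L^{2n/(n-2s)}}^{2n/(n-2s)}<\infty$ for $k>0$ overcomes this. With that in place the iteration runs exactly as in the bounded case, and the combination with Silvestre's local Schauder estimate plus a covering argument produces both conclusions of the lemma.
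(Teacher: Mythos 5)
Your proposal is correct, but for the $L^{\infty}$ bound it follows a genuinely different route from the paper. The paper does not run a level-set iteration: it first uses the duality $L^{\frac{2n}{n+2s}}(\R^{n})\subset \mathring{H}^{-s}(\R^{n})$ and the Riesz representation theorem to produce the global potential $\varphi$ solving $(-\Delta)^{s}\varphi=-f$ in all of $\R^{n}$, identifies $\varphi$ with the Riesz potential $\int_{\R^{n}}\frac{-f(y)}{|x-y|^{n-2s}}\,dy$ (via Maz'ya--Khavin and uniqueness), bounds $\|\varphi\|_{L^{\infty}}$ by splitting the kernel near and far and using $f\in L^{\infty}\cap L^{\frac{2n}{n+2s}}$, and then observes that $\omega=u_{\Omega,f}+\varphi$ minimizes the Gagliardo seminorm among functions equal to $\varphi$ on $\Omega^{c}$; truncating $\omega$ at the levels $\pm\|\varphi\|_{L^{\infty}}$ decreases the seminorm and preserves the constraint, so uniqueness forces $\|\omega\|_{L^{\infty}}\leq\|\varphi\|_{L^{\infty}}$ and hence $\|u_{\Omega,f}\|_{L^{\infty}}\leq 2\|\varphi\|_{L^{\infty}}$. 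Your De~Giorgi--Stampacchia iteration works directly on the Euler--Lagrange identity for the minimizer (testing with $(u-k)^{+}$ is legitimate, your pointwise inequality is valid, and your use of $u\in L^{2n/(n-2s)}$ to get $|A(k)|<\infty$ correctly handles the unboundedness of $\Omega$), so it avoids the representation formula and the global uniqueness input, at the price of the iteration machinery; the paper's argument is shorter once the potential $\varphi$ is available and gives the explicit comparison with $\|\varphi\|_{L^{\infty}}$. For the interior regularity both you and the paper ultimately rely on the same interior Schauder estimates (Silvestre, Ros-Oton--Serra); note only that in the borderline case $2s+\beta\in\mathbb{N}$ your substitute exponent yields $C^{2s+\beta-\varepsilon}_{loc}$ rather than $C^{2s+\beta}_{loc}$, whereas the paper simply asserts the endpoint case by citation --- a point that is delicate in the literature but immaterial for the way the lemma is used later.
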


 \begin{proof} Since the space $\mathring{H}^{s}(\R^n)$ is continuously embedded into $L^{\frac{2n}{n-2s}}(\R^n)$, we conclude by duality that $
 L^{\frac{2n}{n+2s}}(\R^n)\subset \mathring{H}^{-s}(\R^n)$.
 So we have both $f$ and $-f$ belong to the space $\mathring{H}^{-s}(\R^n)$.

   According to the Riesz representation theorem, for any $-f\in \mathring{H}^{-s}(\R^n)$, there exists a unique potential $\varphi \in \mathring{H}^{s}(\R^{n})$ such that
 \begin{equation}\label{d3}
 (\varphi, v)_{\mathring{H}^{s}(\R^{n})}=(-f,v)\ \ \ \ \forall \ v\in \mathring{H}^{s}(\R^{n}),
 \end{equation}
 where
  $(\varphi,v)_{\mathring{H}^{s}(\R^{n})}= \int_{\R^{n}}\int_{\R^{n}} \frac{(\varphi(x)-\varphi(y))(v(x)-v(y))}{|x-y|^{n+2s}} dxdy$
  and
  $(-f,\cdot):\mathring{H}^{s}(\R^{n})\rightarrow \R$ denotes the bounded linear functional generated by $-f$. Moreover,
 \begin{equation*}
 \|\varphi\|_{\mathring{H}^{s}(\R^{n})}= \|-f\|_{\mathring{H}^{-s}(\R^{n})}.
 \end{equation*}
 The function $\varphi \in \mathring{H}^{s}(\R^{n})$ satisfying \eqref{d3} is interpreted as the weak solution of
 \begin{equation*}
 (-\Delta)^{s}\varphi=-f\ \ \ \ \text{in} \ \R^{n},
 \end{equation*}
 see also \cite{LMM2015}.

 Next, we show that $\varphi\in L^{\infty}(\R^n)$.
By Lemma 1.8 of \cite{MH1972},
\begin{equation*}
(-\Delta)^{s}(\int_{\R^n}\frac{-f(y)}{|x-y|^{n-2s}}dy)=-f(x),
\end{equation*}
and by the uniqueness of $\varphi$, then
\begin{equation*}
\varphi(x)= \int_{\R^n}\frac{-f(y)}{|x-y|^{n-2s}}dy.
\end{equation*}
Let $\R^{n}=B_{1}(x)\cup (\R^{n}\setminus B_{1}(x))$, since $f\in L^{\infty}(\R^{n})\cap L^{\frac{2n}{n+2s}}(\R^{n})$, then we have

\begin{equation*} \begin{aligned}
  |\varphi(x)|&\leq\int_{B_{1}(x)}\frac{|f(y)|}{|x-y|^{n-2s}}dy+\int_{\R^{n}\setminus B_{1}(x)}\frac{|f(y)|}{|x-y|^{n-2s}}dy\\
  &\leq \|f\|_{L^{\infty}(B_{1}(x))}\int_{B_{1}(x)}\frac{1}{|x-y|^{n-2s}}dy\\
  &+\|f\|_{L^{\frac{2n}{n+2s}}(\R^{n}\setminus B_{1}(x)}\left (\int_{\R^{n}\setminus B_{1}(x)}\frac{1}{|x-y|^{2n}}dy\right)^{\frac{n-2s}{2n}}\\
  &\leq \|f\|_{L^{\infty}(\R^{n})}\int_{B_{1}(0)}\frac{1}{|y|^{n-2s}}dy+\|f\|_{L^{\frac{2n}{n+2s}}(\R^{n})}\left(\int_{\R^{n}\setminus B_{1}(0)}\frac{1}{|y|^{2n}}dy\right )^{\frac{n-2s}{2n}}\\
  &<\infty.
  \end{aligned}
  \end{equation*}
Thus, we have $\varphi\in L^{\infty}(\R^n)$.

Since for any $v\in \mathring{H}^{s}(\R^n)$,
 \begin{equation*}
 \frac{1}{2}\|v\|_{\mathring{H}^{s}(\R^n)}^{2}-\int_{\R^{n}}v(x)f(x)dx=\frac{1}{2}\|v+\varphi\|_{\mathring{H}^{s}(\R^n)}^{2}-\frac{1}{2}\|\varphi\|_{\mathring{H}^{s}(\R^n)}^{2}.
 \end{equation*}
 Then the function $\omega_{\Omega,f}:=u_{\Omega,f}+\varphi$ is the minimizer of the following problem
 \begin{equation}\label{d4}
 \text{argmin}\{\|\omega\|_{\mathring{H}^{s}(\R^n)}^{2}:\omega\in \mathring{H}^{s}(\R^n),\ \omega|_{\Omega^{c}}=\varphi\}.
 \end{equation}

We define the function $\bar{\omega}$ by
\begin{equation}\label{aa27}
\bar{\omega}:=\text{min}(\text{max}(\omega,-\|\varphi\|_{L^{\infty}(\R^{n})}),\|\varphi\|_{L^{\infty}(\R^{n})}).
\end{equation}

If $\omega\in \mathring{H}^{s}(\R^n)$ and $\omega|_{\Omega^{c}}=\varphi$, then by the definition of $\bar{\omega}$, we have $\bar{\omega}|_{\Omega^{c}}=\varphi$ and
$\|\bar{\omega}\|_{\mathring{H}^{s}(\R^n)}^{2}\leq \|\omega\|_{\mathring{H}^{s}(\R^n)}^{2}$. Thus, $\|\bar{\omega_{\Omega,f}}\|_{\mathring{H}^{s}(\R^n)}^{2}\leq \|\omega_{\Omega,f}\|_{\mathring{H}^{s}(\R^n)}^{2}$
Since $\omega_{\Omega,f}$ is the unique minimizer of the problem \eqref{d4}, then $\|\bar{\omega_{\Omega,f}}\|_{\mathring{H}^{s}(\R^n)}^{2}= \|\omega_{\Omega,f}\|_{\mathring{H}^{s}(\R^n)}^{2}$ and $\bar{\omega_{\Omega,f}}=\omega_{\Omega,f}$. Then  $\|\omega_{\Omega,f}\|_{L^{\infty}(\R^n)}=\|\bar{\omega_{\Omega,f}}\|_{L^{\infty}(\R^n)}\leq \|\varphi\|_{L^{\infty}(\R^n)}$. Thus,
 \begin{equation*}
 \|u_{\Omega,f}\|_{L^{\infty}(\R^n)}\leq \|\omega_{\Omega,f}\|_{L^{\infty}(R^n)}+\|\varphi\|_{L^{\infty}(\R^n)}\leq 2\|\varphi\|_{L^{\infty}(\R^n)}.
 \end{equation*}
 In addition, if $f\in C_{loc}^{\beta}(\Omega)$, then $u_{\Omega,f}\in C_{loc}^{2s+\beta}$ is a
 consequence of  \cite{RX2016} (see also \cite{L1972,RS2014}) whenever $2s+\beta$ is an integer or not.

 \end{proof}

 Before establishing the variational formula, we recall some definitions and basic facts. We assume that $\Omega$ is an open set not necessarily bounded, $\partial \Omega \in C^{2}$ is compact and bounded.

Let $X\in C^{\infty}(\R^{n},\R^{n})$ be a smooth vector filed. We consider a family of deformations $\{\Phi_{t}\}_{t \in (-1,1)}$ with the following properties:
\begin{equation}\label{b28}
\begin{aligned}
&\Phi_{t} \in C^{2}(\R^n;\R^n)\ \mathrm{for}\ t \in (-1,1),\\
 &\Phi_0=\mathrm{Id}_{\R^n}, \mathrm{and}\ \Phi_{t}=\text{Id} +tX.
\end{aligned}
\end{equation}
It is well-known that
\begin{equation}\label{aa28}
\mathrm{Jac} \Phi_{t}(x) =1+t\mathrm{div} X.
\end{equation}

Here, we choose the vector field $X(x)$ on $\R^{n}$ such that
\begin{equation*}
X|_{\partial \Omega}(x)=h_{L}(\nu_{\Omega}(x))\nu_{\Omega}(x),
\end{equation*}
 then $\Phi_{t}(\Omega)=(I+tX)(\Omega)=\Omega+tL$, see also \cite{DS2013}.

 The $s$-normal derivative of a function $u$ at $x \in \partial\Omega$ is defined by
\begin{equation}\label{d2}
\partial_{\nu}^{s}u(x):=\lim\limits_{t\rightarrow0}\frac{u(x+t\nu(x))-u(x)}{t^{s}},
\end{equation}
where $\nu(x)$ is the outerward normal vector to $\partial \Omega$ at $x$. When the function $u$ vanishes at the points of $\Omega^{c}$, we define
\begin{equation*}
\partial_{\nu}^{s}u(x):=-\lim\limits_{t\rightarrow0}\frac{u(x-t\nu(x))}{t^{s}},
\end{equation*}
In particular, for $s=1$, \eqref{d2} coincides with the classical normal derivative.
Let $\delta(x)=dist(x,\R^{n}\backslash\Omega)-dist(x,\Omega)$ be the signed distance function. Since we assume $\partial\Omega\in C^{2}$, then the signed distance function $\delta$ is also $C^{2}$ near the $\partial \Omega$ but not globally on $\R^{n}$.
Since $\partial \Omega$ is bounded, then there exists a Ball $B_{R}(0)\subset \R^{n}$ such that $\partial \Omega \subset B_{R}(0)$.
We assume that $\tilde{\Omega}=\Omega \cap B_{R}(0)$. For $x\in \partial\Omega$, the function
\begin{equation}\label{f1}
\frac{u}{\delta^{s}}(x):=\lim\limits_{y\rightarrow x, y\in \Omega}\frac{u}{\delta^{s}}(y),
\end{equation}
is well defined on $\partial \Omega$ as a limit, since the function $\frac{u}{\delta^{s}}$  extends to a function in $C^{\beta}(\tilde{\Omega})$ for some $\beta>0$, see \cite{RS2014}.
We define $\psi(x):=\frac{u}{\delta^{s}}(x)$, then $
\psi(x)=\lim\limits_{y\rightarrow x, y\in \Omega}\psi(y)$.
Additionally, the function $\delta^{1-s}\nabla u \cdot \nu=s\frac{u}{\delta^{s}}$, see \cite{FJ2021}. Since we consider $u|_{\partial \Omega}=0$, then the functional $\frac{u}{\delta^{s}}$ plays the role of an fractional inner normal derivative  for $x\in \partial\Omega$ and
\begin{equation*}
|\frac{u}{\delta^{s}}(x)|=|\partial_{\nu}^{s}u(x)|.
\end{equation*}

Here we introduce some critical mappings in \cite{DF2021}.

For $\varepsilon>0$, let $\Omega^{\varepsilon}=\{x\in \R^{n}:|\delta(x)|\leq \varepsilon\}$. We define the map
\begin{equation}\label{c1}
\begin{aligned}
\Psi&: \partial\Omega \times (-\varepsilon,\varepsilon)\rightarrow \Omega^{\varepsilon}\\
&\Psi(\sigma,r)=\sigma+r\nu(\sigma),
\end{aligned}
\end{equation}
where $\nu$ is the unit outer normal at $\sigma$.
Since  $\partial\Omega \in C^{2}$, then $\Psi$ is differentiable and $\delta(\Psi(\sigma,r))=r$ for $\sigma \in \partial\Omega$.
 Since we also assume $\partial \Omega$ is compact, then there exists an open ball $ B\subset \R^{n-1}$ centered at the origin and a  parametrization $f_{\sigma}$ with the property that for $\sigma \in \partial\Omega$,
\begin{equation*}
\begin{aligned}
&f_{\sigma}: B\rightarrow \partial \Omega, \ \ \ \ \ \ f_{\sigma}(0)=\sigma\\
&df_{\sigma}(0):\R^{n-1}\rightarrow \R^{n}.
\end{aligned}
\end{equation*}
For $z\in B$, we have
\begin{equation*}
f_{\sigma}(z)-f_{\sigma}(0)=df_{\sigma}(0)z+O(|z|^{2}).
\end{equation*}
Let $\nu_{\sigma}(z):=\nu(f_{\sigma}(z))$ for $z\in B$. We define
\begin{equation}\label{c2}
\begin{aligned}
&\Psi_{\sigma}:(-\varepsilon,\varepsilon)\times B\rightarrow  \Omega^{\varepsilon} \\
&\Psi_{\sigma}(r,z)=f_{\sigma}(z)+r\nu_{\sigma}(z).
\end{aligned}
\end{equation}
Then $\Psi_{\sigma}$ is a bi-Lipschitz map that maps $(-\varepsilon,\varepsilon)\times B$ onto a neighborhood of $\sigma$. In particular, we have $\Psi_{\sigma}(r,0)=\Psi(\sigma,r)$.
Further details about these important mappings have been explored, and can be found in \cite{DF2021}.

The next lemma shows the uniformly continuity of $\psi_{t}:=\frac{u_{t}}{\delta_{t}^{s}}$, which is  crucial for the computation of the Hadamard variational formula of $J_{f}(\Omega)$.
\begin{lemma}\label{A7}
Let $\Omega, \Omega_{t}\in C^{1,\beta}$ for some $\beta\in (0,1)$ be unbounded open sets and $\partial \Omega, \partial\Omega_{t}$ be compact and uniformly bounded. Let $f\in L^{\infty}(\R^{n})\cap L^{\frac{2n}{n+2s}}(\R^{n})$. Suppose that $u$ and $u_{t}$ satisfy the problem \eqref{d1} and $\Omega_{t}\rightarrow \Omega$ in the Hausdorff distance, as $t\rightarrow 0$. Then $u_{t}\rightarrow u$ a.e. in $\R^{n}$. Moreover, for $x_{t}\in \partial\Omega_{t}$ and $x\in \partial\Omega$, if $x_{t}\rightarrow x$, then $\psi_{t}(x_{t}):=\frac{u_{t}}{\delta_{t}^{s}}(x_{t})\rightarrow \psi(x):=\frac{u}{\delta^{s}}(x)$, as $t\rightarrow 0$, where $\delta_{t}(x)=dist(x,\R^{n} \backslash \Omega_{t})-dist(x,\Omega_{t})$.
\end{lemma}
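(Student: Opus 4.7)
The plan is to split the proof into two parts: first, the convergence $u_t\to u$ almost everywhere in $\R^n$, and second, the pointwise convergence $\psi_t(x_t)\to\psi(x)$ on the boundary.

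For the first part, I would start from the energy bound obtained by testing the variational characterization of $u_t$: since $u_t$ minimizes $I_{\Omega_t,f}$ and the zero function is admissible,
\[
\tfrac{1}{2}\|u_t\|_{\mathring{H}^s(\R^n)}^2 \;\le\; \int_{\R^n} f\,u_t\,dx \;\le\; \|f\|_{\mathring{H}^{-s}(\R^n)}\,\|u_t\|_{\mathring{H}^s(\R^n)},
\]
so $\{u_t\}$ is uniformly bounded in $\mathring{H}^s(\R^n)$, and by Lemma~\ref{A4} also uniformly bounded in $L^\infty(\R^n)$. Up to a subsequence $u_t\rightharpoonup u^*$ weakly in $\mathring{H}^s(\R^n)$, strongly in $L^2_{\mathrm{loc}}(\R^n)$, and (after a further subsequence) almost everywhere in $\R^n$. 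Since $\partial\Omega_t$ is contained in a common ball and $\Omega_t\to\Omega$ in Hausdorff distance, for every $x\notin\overline{\Omega}$ a small ball around $x$ lies in $\Omega_t^c$ for $t$ small, so the condition $u_t|_{\Omega_t^c}=0$ passes to the limit and yields $u^*|_{\Omega^c}=0$. Passing to the limit in the distributional formulation of \eqref{d1} for $u_t$ against arbitrary test functions identifies $u^*$ as the unique minimizer of $I_{\Omega,f}$, hence $u^*=u$. Uniqueness of the limit then upgrades the subsequential convergence to convergence of the full family, delivering the claimed a.e.\ convergence.

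For the boundary convergence, the key input is a uniform boundary regularity estimate for \eqref{d1}. Under the uniform $C^{1,\beta}$ control on $\partial\Omega_t$ and the $L^\infty\cap L^{2n/(n+2s)}$ control on $f$, the Ros-Oton--Serra type estimates invoked in the proof of Lemma~\ref{A4} (see \cite{RS2014}) give $\psi_t=u_t/\delta_t^s\in C^{\gamma}(\overline{\tilde\Omega_t})$ for some $\gamma\in(0,\beta)$, with a norm depending only on $\|f\|_{L^\infty}$, $\|f\|_{L^{2n/(n+2s)}}$, the $C^{1,\beta}$ modulus of $\partial\Omega_t$ and a uniform interior-sphere radius, all of which are bounded uniformly in $t$. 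Composing with the collar parametrization $\Psi_\sigma$ from \eqref{c2}, whose bi-Lipschitz constants are uniform for $t$ small, the family $\{\psi_t\}$ becomes equicontinuous and uniformly bounded on a common one-sided tubular neighbourhood of $\partial\Omega$. By Arzel\`a--Ascoli a subsequence $\psi_t\to\psi^*$ uniformly on that neighbourhood; on its interior portion $\delta_t\to\delta$ uniformly and $u_t\to u$ almost everywhere, so $\psi^*=u/\delta^s=\psi$ on a dense set, hence everywhere by continuity, and by uniqueness the convergence holds along the whole family. Consequently, if $x_t\in\partial\Omega_t$ and $x_t\to x\in\partial\Omega$, the uniform convergence in a neighbourhood of $x$ forces $\psi_t(x_t)\to\psi(x)$.

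The main obstacle I anticipate is making the boundary H\"older estimate for $u_t/\delta_t^s$ truly uniform in $t$. One must verify that the Ros-Oton--Serra constants are controlled only by quantities that remain bounded as $\Omega_t\to\Omega$ in Hausdorff distance: the $C^{1,\beta}$ modulus of the boundary, a uniform interior-sphere radius and the norms of $f$. Once this uniformity and the continuous dependence of the collar map $\Psi_\sigma$ on $t$ are in hand, the Arzel\`a--Ascoli extraction and identification of the limit via interior convergence of $u_t$ and $\delta_t$ are routine.
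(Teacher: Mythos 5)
Your argument is correct in substance and shares the backbone of the paper's proof, but it differs at two points. For the identification of the limit of $u_t$, you pass to the limit in the distributional (Euler--Lagrange) formulation of \eqref{d1}, using that compact subsets of $\Omega$ are eventually contained in $\Omega_t$ and exterior points eventually lie in $\Omega_t^{c}$, and then invoke uniqueness of the minimizer; the paper instead proves $\Gamma$-convergence of $I_{\Omega_t,f}$ to $I_{\Omega,f}$ with respect to weak $\mathring{H}^{s}(\R^n)$ convergence (liminf by lower semicontinuity and the compact embedding, limsup by approximation with $C_{c}^{\infty}(\Omega)$ functions and a diagonal argument). Both routes rest on the same geometric facts and on strict convexity/uniqueness, so your version is an acceptable, slightly more elementary alternative. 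For the boundary convergence, both proofs use the same key input -- the Ros-Oton--Serra boundary estimate giving a bound on $\psi_t=u_t/\delta_t^{s}$ in $C^{\beta}$ up to $\partial\Omega_t$ that is uniform in $t$ because it depends only on the uniform $C^{1,\beta}$ data and on $\|f\|_{L^{\infty}}$, exactly the uniformity you flag as the main obstacle -- followed by Arzel\`a--Ascoli and identification of the limit through $u_t\to u$ and $\delta_t\to\delta$. The difference is the device for placing the $\psi_t$ on a common compact set: you compose with the collar parametrizations $\Psi_{\sigma}$, whereas the paper extends each $\psi_t$ to the fixed ball $\bar B_{R_0}(0)$ by the classical Gilbarg--Trudinger extension theorem with uniformly controlled $C^{\beta}$ norm and runs Arzel\`a--Ascoli there.

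The paper's extension trick is worth adopting rather than the collar composition: since $x_t\in\partial\Omega_t$ may lie on either side of $\partial\Omega$, a one-sided tubular neighbourhood of $\partial\Omega$ need not contain $x_t$, nor need it be contained in the domain of $\psi_t$, so evaluating your transplanted functions simultaneously at $x_t$ and at $x$ requires bookkeeping (convergence of the collars of the varying domains $\Omega_t$) that the global H\"older extension makes unnecessary; with the extensions $\bar\psi_t$ one simply writes $|\bar\psi_t(x_t)-\bar\psi(x)|\le|\bar\psi_t(x_t)-\bar\psi_t(x)|+|\bar\psi_t(x)-\bar\psi(x)|$ and uses equicontinuity plus uniform convergence. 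Finally, note that upgrading the subsequential a.e.\ convergence of $u_t$ to the full family via uniqueness of the limit strictly yields convergence in $L^{2}_{loc}$ (and locally uniform convergence where the uniform $C^{s}$ bound applies), which is what is actually used later; the paper glosses over the same point, so this is a shared, minor imprecision rather than a gap specific to your proof.
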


\begin{proof}
Since $\partial \Omega_{t}$ is compact and uniformly bounded, we may choose $R_{1}>2R_{0}>0$ such that $\partial \Omega_{t}\subset B_{R_{0}/2}(0)\subset \R^{n}$ and $B_{R_{0}}(0)\subset B_{R_{1}/2}(x_{0})$ for all $t>0$ and some $x_{0}\in \partial\Omega_{t}$. Denote $\tilde{\Omega_{t}}=\Omega_{t}\cap B_{R_{0}+R_{1}}(0)$, then $\tilde{\Omega_{t}}$ is a bounded set in $\R^{n}$.

By Lemma \ref{A4}, we know that there exists a constant $C_{0}>0$ independent $t$  such that
\begin{equation*}
\|u_{t}\|_{L^{\infty}(\R^{n})}\leq C_{0}.
\end{equation*}
By Proposition 1.1 in \cite{RS2017}, $u_{t}$ is uniformly bounded in $C^{s}(B_{R_{0}}(0))$, that is, there exists a constant $C_{1}>0$ depending only on the $C^{1,\beta}$ norm of boundary of $\partial\Omega_{t}$. Since $\partial \Omega_{t}$ is uniformly bounded and uniformly of class $C^{1,\beta}$, then
\begin{equation*}
\|u_{t}\|_{C^{s}(B_{R_{0}}(0))}\leq C_{1}.
\end{equation*}
 According to the Arzel\`a-Ascoli theorem, the functions $u_{t}$ (up to a subsequence) converge to $u^{*}$ uniformly in $\bar{B}_{R_{0}}$.

Next, we will show that $u^{*}=u$ a.e. in $B_{R_{0}}(0)$. We first establish the $\Gamma-$convergence of the functional $I_{\Omega_{t},f}$ to $I_{\Omega,f}$ with respect to the weak convergence in $\mathring{H}^{s}(\R^n)$.
The latter is a natural topology, since the minimizers of $\Omega_{t,f}$ are uniformly bounded in $\mathring{H}^{s}(\R^n)$ independently of $t$.
The $\Gamma$-liminf follows from lower-semicontinuity of the $\mathring{H}^{s}(\R^n)$-norm and the linear term, together with the fact that the limit function vanishes a.e. in
$\Omega^{c}$ by the compact embedding of $\mathring{H}^{s}(\R^n)$ into $L_{loc}^{p}(\R^{n})$ for any $p<\frac{2n}{n-2s}$.
On the other hands, the $\Gamma$-limsup follows by approximating the limit function by a function from $C_{c}^{\infty}(\Omega)$, for which we have pointwise convergence of $I_{\Omega_{t},f}$, and a diagonal argument.
Thus,
we have the minimizer $u_{t}\rightharpoonup u$ in $\mathring{H}^{s}(\R^n)$. Since the function $I_{\Omega,f}$ admits the unique minimizer, then $u_{t}\rightarrow u$ a.e. in $\R^{n}$.
In particular, $u^{*}=u$ a.e. in $B_{\R_{0}}(0)$.

We consider the function $\psi_{t}:=\frac{u_{t}}{\delta_{t}^{s}}: \tilde{\Omega_{t}}\rightarrow \R$. By Theorem 1.2 in \cite{RS2017}, then $\psi_{t}$ is uniformly bounded in $C^{\beta}(\bar{B}_{R_{0}}(0))$. By the classical extension theorem (Theorem 6.38) in \cite{GT1983}, we can extend $\psi_{t}$ to a function $\bar{\psi}_{t}:\bar{B}_{R_{0}}(0)\rightarrow \R$ such that
\begin{equation*}
\|\bar{\psi_{t}}\|_{C^{\beta}(\bar{B}_{R_{0}}(0))}\leq C_{1}\|\psi_{t}\|_{C^{\beta}(\bar{B}_{R_{0}}(0))}\leq C,
\end{equation*}
where $C_{1}$ and $C$ are independent of $t$. According to the Arzel\`a-Ascoli theorem, there exists a function $\bar{\psi}^{*}\in C^{\alpha}(\bar{B}_{R_{0}}(0))$ such that $\bar{\psi_{t}}$ (up to a subsequence) converges to $\bar{\psi}^{*}$ uniformly. Since $u_{t}\rightarrow u$ and $\delta_{t}\rightarrow \delta$, then $\bar{\psi}^{*}$ is the extension of $\psi$. Moreover, we have $\bar{\psi}^{*}|_{\tilde{\Omega}\cap \bar{B}_{R_{0}(0)}}=\psi$. Finally, $\bar{\psi_{t}}$ is a continuous extension of $\partial_{\nu}^{s}u_{t}$ since for $x\in \partial \Omega_{t}$,
\begin{equation*}
\bar{\psi_{t}}(x)=\lim\limits_{s\rightarrow 0+}\psi_{t}(x-s\nu(x))=\lim\limits_{s\rightarrow 0+}
\frac{u_{t}(x-s\nu(x))}{\delta_{t}(x-s\nu(x))^{s}}=-\partial_{\nu}^{s}u_{t}(x).
\end{equation*}
In particular, for $x_{t}\in \partial \Omega_{t}, x\in \partial \Omega$ and $x_{t}\rightarrow x$, then
\begin{equation*}
\begin{aligned}
&|\frac{u_{t}}{\delta_{t}^{s}}(x_{t})-\frac{u}{\delta^{s}}(x)|=|\bar{\psi}_{t}(x_{t})-\bar{\psi}(x)|\\
&\leq |\bar{\psi}_{t}(x_{t})-\bar{\psi}_{t}(x)|+|\bar{\psi}_{t}(x)-\bar{\psi}(x)|\\
&\rightarrow 0.
\end{aligned}
\end{equation*}
Therefore, $\frac{u_{t}}{\delta_{t}^{s}}(x_{t})\rightarrow \frac{u}{\delta^{s}}(x)$, as $t\rightarrow0$.

\end{proof}
In the following theorem, we obtain the first variational formula for $J_{f}(\Omega)$.
\begin{theorem}\label{A1}
 Let $\Omega$ be an  open set with compact boundary of class $C^{2}$. Let $f \in L^{\infty}(\R^{n})\cap L^{\frac{2n}{n+2s}}(\R^{n})\cap C_{loc}^{\beta}(\R^{n})$ for some $\beta\in (0,1)$. The functional $J_{f}(\Omega)$ is defined as \eqref{c3}. Then we have
\begin{equation*}
\frac{d}{dt}J_{f}(\Phi_t(\Omega))\Big|_{t=0}=-\frac{1}{2}c_{0}\int_{\partial \Omega}|\frac{u_{\Omega,f}}{\delta^{s}}(\sigma)|^{2}h_{L}(\nu(\sigma)) d\mathcal{H}^{n-1}(\sigma),
\end{equation*}
where $c_{0}=c_{s}c_{n,s}a_{n,s}$ is a positive constant and $\nu(\sigma)$ is the unit outward normal vector to $\partial\Omega$.
\end{theorem}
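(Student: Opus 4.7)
The plan is to sandwich the finite difference $J_f(\Omega_t)-J_f(\Omega)$ between two explicit $t$-dependent quantities built from the diffeomorphism $\Phi_t$ and the minimizers $u:=u_{\Omega,f}$, $u_t:=u_{\Omega_t,f}$, where $\Omega_t:=\Phi_t(\Omega)$. Since $u|_{\Omega^c}=0$, the function $u\circ\Phi_t^{-1}$ vanishes outside $\Omega_t$ and is admissible for the minimum defining $J_f(\Omega_t)$; symmetrically, $u_t\circ\Phi_t$ is admissible for $J_f(\Omega)$. By minimality,
\begin{equation*}
I_{\Omega_t,f}(u_t) - I_{\Omega,f}(u_t\circ\Phi_t) \,\leq\, J_f(\Omega_t)-J_f(\Omega) \,\leq\, I_{\Omega_t,f}(u\circ\Phi_t^{-1}) - I_{\Omega,f}(u).
\end{equation*}
I would divide by $t$, show that both sides share the same limit as $t\to 0^{\pm}$, and identify this common limit with the stated boundary integral. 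This route bypasses the classical ``shape derivative of the state'' $\dot u$, which would be delicate here because $u$ is only $C^s$ up to $\partial\Omega$.

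\textbf{Differentiating the explicit bounds.} After the change of variables $x=\Phi_t(y)$, the upper bound becomes a purely $t$-dependent functional with $u$ frozen,
\begin{equation*}
I_{\Omega_t,f}(u\circ\Phi_t^{-1}) = \tfrac{1}{2}\iint \frac{|u(y_1)-u(y_2)|^2}{|\Phi_t(y_1)-\Phi_t(y_2)|^{n+2s}}\,J_t(y_1)J_t(y_2)\,dy_1 dy_2 \,-\, \int f(\Phi_t(y))u(y)J_t(y)\,dy,
\end{equation*}
with $J_t=\mathrm{Jac}\,\Phi_t = 1+t\,\mathrm{div}\,X$ from \eqref{aa28}. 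Differentiating at $t=0$ along $\Phi_t=\mathrm{Id}+tX$ produces an explicit bulk integrand in $u$, $f$, $X$ and $\mathrm{div}\,X$. The lower bound admits an entirely analogous pulled-back form with $u$ replaced by $u_t$; Lemma \ref{A7} provides the convergences $u_t\to u$ a.e.\ and $u_t/\delta_t^s\to u/\delta^s$ uniformly near $\partial\Omega$ that are needed to identify the first-order coefficient in $t$, so both sandwich bounds exhibit the same leading term. The singular-kernel estimates for the tubular-neighborhood maps $\Psi,\Psi_\sigma$ of \eqref{c1}--\eqref{c2} developed in \cite{DF2021} are the tool for controlling these double integrals uniformly in $t$ near the diagonal and near $\partial\Omega$.

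\textbf{From bulk to the boundary integral.} Having identified the common bulk expression, the final step is to reorganize it as the boundary integral in the theorem. This is a fractional analog of the classical Hadamard formula and will rest on the Ros-Oton--Serra fractional Pohozaev identity: for $v\in\mathring{H}^s(\R^n)$ with $v|_{\Omega^c}=0$ and $(-\Delta)^s v\in L^\infty(\Omega)$,
\begin{equation*}
\int_\Omega (X\cdot\nabla v)(-\Delta)^s v\,dx + \tfrac{2s-n}{2}\int_\Omega v\,(-\Delta)^s v\,dx \,=\, \tfrac{a_{n,s}}{2}\int_{\partial\Omega}\Bigl|\tfrac{v}{\delta^s}\Bigr|^2(X\cdot\nu)\,d\mathcal{H}^{n-1}.
\end{equation*}
Applied with $v=u$ and $X|_{\partial\Omega}=h_L(\nu)\nu$, so that $X\cdot\nu=h_L(\nu)$ on $\partial\Omega$, and combined with $(-\Delta)^s u = f$ in $\Omega$, this packages every bulk contribution generated in the previous paragraph into the desired boundary integral and produces exactly the constant $c_0=c_s c_{n,s}a_{n,s}$.

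\textbf{Main obstacle.} The technical heart lies in the limit passage of the previous two steps rather than in the formal computation. Since $u\sim \delta^s$ at the boundary, $\nabla u$ is singular of order $\delta^{s-1}$ and the bulk expressions coming from differentiating the pulled-back functionals are only borderline integrable near $\partial\Omega$ and near the diagonal $\{y_1=y_2\}$. Controlling them uniformly in $t$, and in parallel for $u$ and $u_t$ so that the sandwich actually collapses to a single limit, is where the refined boundary and kernel estimates of \cite{DF2021}, together with the $C^\beta$ regularity $u/\delta^s\in C^\beta(\overline{\tilde\Omega})$ from \cite{RS2014,RS2017}, do the real work; once these are in hand, the squeeze principle yields the stated formula.
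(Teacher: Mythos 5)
Your sandwich set-up (comparing $J_f(\Omega_t)-J_f(\Omega)$ with the pulled-back energies of the competitors $u\circ\Phi_t^{-1}$ and $u_t\circ\Phi_t$) is legitimate and genuinely different from the paper, which instead uses the pointwise equations and the symmetry of $(-\Delta)^s$ to reduce the difference quotient to $-\tfrac12 c_{n,s}\int_{\Omega\setminus\Omega_t}\int_{\Omega_t}\frac{u(x)u_t(y)}{|x-y|^{n+2s}}\,dy\,dx$ plus a negligible term, and then performs a boundary blow-up in the tubular coordinates \eqref{c1}--\eqref{c2} of \cite{DF2021}, with Lemma \ref{A7} and dominated convergence producing the constant $c_sc_{n,s}a_{n,s}$. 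The decisive step of your route, however, rests on a ``Pohozaev identity'' that is false as stated. With a general vector field $X$ the coefficient of $\int_\Omega v(-\Delta)^s v\,dx$ cannot be the $X$-independent constant $\tfrac{2s-n}{2}$: take $X\in C_c^{\infty}(\Omega)$, so $X\cdot\nu\equiv 0$ on $\partial\Omega$; your identity would force $\int_\Omega (X\cdot\nabla v)(-\Delta)^s v\,dx$ to be the same for $X$ and $2X$, hence zero for every such $X$, i.e.\ $f\,\nabla v\equiv 0$ in $\Omega$, which is absurd (e.g.\ $f\equiv 1$). The Ros-Oton--Serra identity concerns the dilation field $X=x$ (and translations); for general $X$ the correct integration-by-parts formulas carry additional interior terms involving $DX$ and the Gagliardo kernel, and verifying that those terms exactly absorb the bulk integrand $\mathrm{div}X(y_1)+\mathrm{div}X(y_2)-(n+2s)\frac{(y_1-y_2)\cdot(X(y_1)-X(y_2))}{|y_1-y_2|^{2}}$ produced by differentiating the pulled-back energy is precisely the mathematical content of the theorem, not a citation-level step; in \cite{DF2021} identities of this generality are intertwined with the Hadamard formula itself, so invoking them wholesale also risks circularity.

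The limit passage is likewise only asserted where it actually matters. For the lower bound you must differentiate the pulled-back functional along $u_t\circ\Phi_t$ and show the derivative has the same limit as for $u$; Lemma \ref{A7} gives $u_t\to u$ a.e.\ and uniform convergence of $u_t/\delta_t^{s}$ near $\partial\Omega$, which does not control the differentiated Gagliardo form --- one needs strong convergence of $u_t\circ\Phi_t$ in $\mathring H^{s}$ (or a direct uniform domination of the kernel-derivative integrand, which is borderline since $u\sim\delta^{s}$ and $\nabla u\sim\delta^{s-1}$ at the boundary), and neither is supplied by \cite{RS2014,RS2017} in the form you need. In the paper these are exactly the estimates done by hand (the bound \eqref{g13} and the far-field estimate for $B(\sigma,t\rho_t r)$), and they are what make the constant $c_0=c_sc_{n,s}a_{n,s}$ appear. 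As written, your proposal is a plausible programme, but its two decisive steps --- the general-$X$ bulk-to-boundary identity and the uniform-in-$t$ limit passage --- are missing, and the one identity you do state precisely is incorrect.
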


\begin{proof}Let $\Omega_{t}=\Phi_{t}(\Omega)$.
Since $\partial \Omega$ is of class $C^{2}$, for all $x\in \partial \Omega$, $y\in \partial \Omega_{t}$ and sufficiently small $t$, we can write
\begin{equation}\label{aaa1}
\begin{aligned}
&y=\Phi_{t}(x),\\
&x=\Phi_{t}^{-1}(y)=y+t\rho_{t}(y)\nu_{t}(y),
\end{aligned}
\end{equation}
where $\nu_{t}$ is the unit outward normal to $\partial \Omega_{t}$ and $\rho_{t}\in C^{2}(\partial \Omega_{t})$. Moreover, $\Phi_{t}^{-1}$ establishes a bijection between $\partial \Omega_{t}$ and $\partial\Omega$. Then for $x\in \partial \Omega$, we have
\begin{equation*}
\rho_{0}(x)=\lim\limits_{t\rightarrow 0}\rho_{t}(\Phi_{t}(x))=\lim\limits_{t\rightarrow 0}\frac{(x-\Phi_{t}(x))\cdot \nu_{t}(\Phi_{t}(x))}{t}=-h_{L}(\nu(x)).
\end{equation*}

For sufficiently small $t>0$, we consider $\Omega_{t}$ as a regular inward deformation of $\Omega$, characterized by satisfying \eqref{aaa1} with $\rho_{t}\geq 0$. Note that the focus here is on inward perturbations, as for outward perturbations, one would merely interchange the roles of $\Omega_{t}$  and $\Omega$.

We denote $u=u_{\Omega,f}$ and $u_{t}=u_{\Omega_t,f}$ for simplicity. According to Lemma \ref{A4}, both $u$ and $u_t$ solve pointwise the equations
\begin{equation}\label{g11}
\left\{
\begin{aligned}
&(-\Delta)^{s}u=f, \ \ \ \ \ \ \mathrm{in}\  \Omega,\\
 &u=0,\ \ \ \ \ \ \ \ \ \ \ \ \ \ \ \mathrm{in}\  \Omega^{c}.
\end{aligned}
\right.
\end{equation}
and
\begin{equation}\label{g12}
\left\{
\begin{aligned}
&(-\Delta)^{s}u_{t}=f, \ \ \ \ \ \ \mathrm{in}\  \Omega_t,\\
 &u_{t}=0,\ \ \ \ \ \ \ \ \ \ \ \ \ \ \ \mathrm{in}\  \Omega_t^{c}.
\end{aligned}
\right.
\end{equation}

From \eqref{g11},\eqref{g12} and \eqref{c3}, we have
\begin{equation}\label{g2}
\begin{aligned}
&\frac{J_{f}(\Omega_{t})-J_{f}(\Omega)}{t}\\
&=-\frac{1}{2}\int_{\R^{n}}\frac{u_{t}(x)(-\Delta)^{s}u_{t}(x)}{t}dx+\frac{1}{2}\int_{\R^{n}}\frac{u(x)(-\Delta)^{s}u(x)}{t}dx\\
&=-\frac{1}{2}\int_{\R^{n}}\frac{(u_{t}(x)+u(x))(-\Delta)^{s}(u_{t}(x)-u(x))}{t}dx\\
&=-\frac{1}{2}\int_{\Omega\backslash \Omega_{t}}\frac{u(x)(-\Delta)^{s}u_{t}(x)}{t}dx+\frac{1}{2}\int_{\Omega\backslash \Omega_{t}}\frac{u(x)f(x)}{t}dx.
\end{aligned}
\end{equation}

We begin by addressing the second term on the right hand side of \eqref{g2}.
By the theorem 1.2 of \cite{RS2017}, we have $|u(x)|\leq C |\delta(x,\partial \Omega)|^{s}\leq C|t\rho_{t}|^{s}$.
Since $f\in L^{\infty}(\R^{n})$ and $|\Omega\backslash \Omega_{t}|\leq Ct$, then
\begin{equation*}
\begin{aligned}
\Big{|}\int_{\Omega\backslash \Omega_{t}}u(x)f(x)dx\Big{|}\leq C t^{s+1}\|f\|_{L^{\infty}(\Omega\backslash \Omega_{t})}\|\rho_{t}\|_{L^{\infty}(\partial \Omega_{t})}^{s}.
\end{aligned}
\end{equation*}
Hence, we deduce that
\begin{equation*}
\lim\limits_{t\rightarrow 0}\frac{1}{2}\int_{\Omega\backslash \Omega_{t}}\frac{u(x)f(x)}{t}dx=0.
\end{equation*}
So we only need to estimate the first term on the right hand side of \eqref{g2}.

Together with \eqref{F-S-1} and  $u_{t}=0$ in $\Omega_{t}^{c}$, then
\begin{equation*}
\begin{aligned}
&-\frac{1}{2}\int_{\Omega\backslash \Omega_{t}}u(x)(-\Delta)^{s}u_{t}(x)dx\\
&=-\frac{1}{2}c_{n,s}\int_{\Omega\backslash \Omega_{t}}\int_{\R^{n}}\frac{u(x)(u_{t}(x)-u_{t}(y))}{|x-y|^{n+2s}}dydx\\
&=\frac{1}{2}c_{n,s}\int_{\Omega\backslash \Omega_{t}}\int_{\Omega_{t}}\frac{u(x)u_{t}(y)}{|x-y|^{n+2s}}dydx.
\end{aligned}
\end{equation*}
Recall the mapping $\Psi:\partial \Omega_{t}\times(-\varepsilon,\varepsilon)\rightarrow (\Omega_{t})^{\varepsilon}$ and consider a change of variable $x=\Psi(\sigma,r)$, then
\begin{equation*}
\begin{aligned}
&\frac{1}{2}c_{n,s}\int_{\Omega\backslash \Omega_{t}}\int_{\Omega_{t}}\frac{u(x)u_{t}(y)}{|x-y|^{n+2s}}dydx\\
&=\frac{1}{2}c_{n,s}\int_{\partial\Omega_{t}}\int_{0}^{t\rho_{t}(\sigma)}\int_{\Omega_{t}}\frac{u(\Psi(\sigma,r))u_{t}(y)}{|\Psi(\sigma,r)-y|^{n+2s}}Jac\Psi(\sigma,r)dydrd\mathcal{H}^{n-1}(\sigma)\\
\end{aligned}
\end{equation*}
For a fixed $\sigma \in \partial \Omega_{t}$, we divide $\Omega_{t}$ into two disjoint parts $\Psi_{\sigma}(0,\varepsilon)\times B$ and $\Omega_{t}\setminus [\Psi_{\sigma}(0,\varepsilon)\times B]$. We write
\begin{equation*}
\begin{aligned}
A(\sigma,r)&=\int_{\Psi_{\sigma}(0,\varepsilon)\times B}\frac{u(\Psi(\sigma,r))u_{t}(y)}{|\Psi(\sigma,r)-y|^{n+2s}}dy,\\
B(\sigma,r)&=\int_{\Omega_{t}\setminus [\Psi_{\sigma}(0,\varepsilon)\times B]}\frac{u(\Psi(\sigma,r))u_{t}(y)}{|\Psi(\sigma,r)-y|^{n+2s}}dy.
\end{aligned}
\end{equation*}
Then we have
\begin{equation*}
\begin{aligned}
&\frac{1}{2}c_{n,s}\int_{\Omega\backslash \Omega_{t}}\int_{\Omega_{t}}\frac{u(x)u_{t}(y)}{|x-y|^{n+2s}}dydx\\
&=\frac{1}{2}c_{n,s}\int_{\partial\Omega_{t}}\int_{0}^{t\rho_{t}(\sigma)}A(\sigma,r)Jac\Psi(\sigma,r)drd\mathcal{H}^{n-1}(\sigma)\\
&+
\frac{1}{2}c_{n,s}\int_{\partial\Omega_{t}}\int_{0}^{t\rho_{t}(\sigma)}B(\sigma,r)Jac\Psi(\sigma,r)drd\mathcal{H}^{n-1}(\sigma)\\
&=\frac{1}{2}c_{n,s}\int_{\partial\Omega_{t}}\int_{0}^{1}A(\sigma,t\rho_{t}r)Jac\Psi(\sigma,t\rho_{t}r)t\rho_{t}drd\mathcal{H}^{n-1}(\sigma)\\
&+
\frac{1}{2}c_{n,s}\int_{\partial\Omega_{t}}\int_{0}^{1}B(\sigma,t\rho_{t}r)Jac\Psi(\sigma,t\rho_{t}r)t\rho_{t}drd\mathcal{H}^{n-1}(\sigma)\\
&:=I+II.
\end{aligned}
\end{equation*}

To estimate $I$, we shall give an estimate for $A(\sigma,t\rho_{t}r)$. Recall the mapping $\Psi_{\sigma}:(-\varepsilon,\varepsilon)\times B\rightarrow \Omega^{\varepsilon}$ and consider a change of variables $y=\Psi_{\sigma}(-\tilde{r},z)$, we have
\begin{equation*}
\begin{aligned}
&A(\sigma,t\rho_{t}r)\\
&=\int_{\Psi_{\sigma}(0,\varepsilon)\times B}\frac{u(\Psi(\sigma,t\rho_{t}r))u_{t}(y)}{|\Psi(\sigma,t\rho_{t}r)-y|^{n+2s}}dy\\
&=\int_{0}^{\varepsilon}\int_{B}\frac{u(\Psi(\sigma,t\rho_{t}r))u_{t}(\Psi_{\sigma}(-\tilde{r},z))}{|\Psi(\sigma,t\rho_{t}r)-\Psi_{\sigma}(-\tilde{r},z)|^{n+2s}}Jac\Psi_{\sigma}(-\tilde{r},z)dzd\tilde{r}\\
&=t\rho_{t}\int_{r}^{\frac{\varepsilon}{t\rho_{t}}+r}\int_{B}\frac{u(\Psi(\sigma,t\rho_{t}r))u_{t}(\Psi_{\sigma}(t\rho_{t}(-\tilde{r}+r),z))}{|\Psi(\sigma,t\rho_{t}r)-\Psi_{\sigma}(t\rho_{t}(-\tilde{r}+r),z)|^{n+2s}}Jac\Psi_{\sigma}(t\rho_{t}(-\tilde{r}+r),z)dzd\tilde{r}\\
&=(t\rho_{t})^{n}\int_{r}^{\frac{\varepsilon}{t\rho_{t}}+r}\int_{\frac{B}{t\rho_{t}\tilde{r}}}\frac{u(\Psi(\sigma,t\rho_{t}r))u_{t}(\Psi_{\sigma}(t\rho_{t}(-\tilde{r}+r),t\rho_{t}\tilde{r}z))}{|\Psi(\sigma,t\rho_{t}r)-\Psi_{\sigma}(t\rho_{t}(-\tilde{r}+r),t\rho_{t}\tilde{r})|^{n+2s}}\\
&\cdot Jac\Psi_{\sigma}(t\rho_{t}(-\tilde{r}+r),t\rho_{t}\tilde{r}z)(\tilde{r})^{n-1}dzd\tilde{r}
\end{aligned}
\end{equation*}

For any $x\in \Omega\backslash \Omega_{t}$, $x=\Psi(\sigma,t\rho_{t}r)=\sigma+t\rho_{t}r\nu_{t}(\sigma)=x_{0}-(1-r)t\rho_{t}\nu(x_{0})$, where $\sigma\in \partial\Omega_{t}$, $x_{0}\in \partial\Omega$ and $r\in [0,1]$ then $\delta (x)=(1-r)t\rho_{t}$. Since $u=\psi\delta^{s}$ and $\psi_{t}\rightarrow \psi$ uniformly as $t\rightarrow 0$, then by lemma \ref{A7},
\begin{equation}\label{g3}
\begin{aligned}
u(\Psi(\sigma,t\rho_{t}r))&=\psi(\Psi(\sigma,rt\rho_{t}))(t\rho_{t}(1-r))^{s}\\
&=(1+o(t))\psi_{t}(\sigma)(t\rho_{t}(1-r))^{s}
\end{aligned}
\end{equation}
Similarly, $u_{t}(x)=\psi_{t}(x)\delta(x,\partial \Omega_{t})^{s}$ for $x\in \Omega_{t}$, then
\begin{equation}\label{g4}
\begin{aligned}
u_{t}(\Psi_{\sigma}(t\rho_{t}(\tilde{r}+r),t\rho_{t}\tilde{r}z))&=\psi_{t}(\Psi_{\sigma}(t\rho_{t}(\tilde{r}+r),t\rho_{t}\tilde{r}z))|-t\rho_{t}\tilde{r}+t\rho_{t}r|^{s}\\
&=(1+o(t))\psi_{t}(\sigma)|-t\rho_{t}\tilde{r}+t\rho_{t}r|^{s}.
\end{aligned}
\end{equation}

It remains to consider the kernel differences $|\Psi(\sigma,t\rho_{t}r)-\Psi_{\sigma}(t\rho_{t}(-\tilde{r}+r),t\rho_{t}\tilde{r})|$.
By  $(v)$ of the Lemma 6.6 in \cite{DF2021}, there exists a positive constant $C_{0}$ such that
\begin{equation*}
\begin{aligned}
&\Big{|}\frac{|\Psi(\sigma,t\rho_{t}r)-\Psi_{\sigma}(t\rho_{t}(-\tilde{r}+r),t\rho_{t}\tilde{r}z)|^{2}}{|t\rho_{t}\tilde{r}|^{2}}-(1+|z|^{2})\Big{|}\\
&=\Big{|}\frac{|\Psi_{\sigma}(t\rho_{t}r,0)-\Psi_{\sigma}(t\rho_{t}(-\tilde{r}+r),t\rho_{t}\tilde{r}z)|^{2}}{|t\rho_{t}\tilde{r}|^{2}}-(1+|z|^{2})\Big{|}\\
&\leq C_{0}(|t\rho_{t}\tilde{r}|+|t\rho_{t}r|+|t\rho_{t}\tilde{r}z|)|z|^{2}.
\end{aligned}
\end{equation*}
Thus, we have
\begin{equation}\label{g10}
\lim\limits_{t\rightarrow 0}\frac{|\Psi(\sigma,t\rho_{t}r)-\Psi_{\sigma}(t\rho_{t}(-\tilde{r}+r),t\rho_{t}\tilde{r}z)|^{2}}{|t\rho_{t}\tilde{r}|^{2}}=1+|z|^{2}.
\end{equation}
By $(iv)$ and $(i)$ of Lemma 6.6 in \cite{DF2021}, then
\begin{equation}\label{g7}
\begin{aligned}
&|\Psi(\sigma,t\rho_{t}r)-\Psi_{\sigma}(t\rho_{t}(-\tilde{r}+r),t\rho_{t}\tilde{r})|\\
&=|\Psi_{\sigma}(t\rho_{t}r,0)-\Psi_{\sigma}(t\rho_{t}(-\tilde{r}+r),t\rho_{t}\tilde{r}z)|\\
&\geq \frac{1}{C_{0}}(|t\rho_{t}\tilde{r}|^{2}+(t\rho_{t}\tilde{r}z)^{2})^{1/2}=\frac{1}{C_{0}}|t\rho_{t}\tilde{r}|(1+|z|^{2})^{1/2},
\end{aligned}
\end{equation}
 and
\begin{equation}\label{g8}
\begin{aligned}
&|Jac\Psi_{\sigma}(t\rho_{t}(-\tilde{r}+r),t\rho_{t}\tilde{r}z)|\leq C_{0},\\
&|Jac\Psi(\sigma,t\rho_{t}r)|\leq C_{0}.
\end{aligned}
\end{equation}
Together with \eqref{g3},\eqref{g4},\eqref{g7} and \eqref{g8}, we have
\begin{equation*}
\begin{aligned}
&|A(\sigma,t\rho_{t}r)|\\
&\leq C_{0}^{2} |(1+o(t))\psi_{t}|^{2}\int_{r}^{\frac{\varepsilon}{t\rho_{t}}+r}\int_{\frac{B}{t\rho_{t}\tilde{r}}}
\frac{(1-r)^{s}|-\tilde{r}+r|^{s}}{\tilde{r}^{2s+1}}\frac{1}{(1+|z|^{2})^{\frac{n+2s}{2}}}dzd\tilde{r}\\
&\leq C_{0}^{2}|(1+o(t))\psi_{t}|^{2}\int_{r}^{\infty}\int_{\R^{n-1}}
\frac{(1-r)^{s}|-\tilde{r}+r|^{s}}{\tilde{r}^{2s+1}}\frac{1}{(1+|z|^{2})^{\frac{n+2s}{2}}}dzd\tilde{r}\\
&=C_{0}^{2}|(1+o(t))\psi_{t}|^{2}a_{n,s}\int_{r}^{\infty}\frac{(1-r)^{s}|-\tilde{r}+r|^{s}}{\tilde{r}^{2s+1}}d\tilde{r},
\end{aligned}
\end{equation*}
where $a_{n,s}=\int_{\R^{n-1}}\frac{1}{(1+|z|^{2})^{\frac{n+2s}{2}}}dz<\infty.$

By using the inequality $(1+x)^{s}\leq 1+sx$ for $x>-1$ and through a basic integral calculation, we obtain
\begin{equation*}
\begin{aligned}
c_{s}:&=\int_{0}^{1}\int_{r}^{\infty}\frac{(1-r)^{s}|-\tilde{r}+r|^{s}}{\tilde{r}^{2s+1}}d\tilde{r}dr\leq \int_{0}^{1}\int_{r}^{\infty}\frac{(\tilde{r}-r)^{s}}{\tilde{r}^{2s+1}}d\tilde{r}dr\\
&=\int_{0}^{1}\int_{r}^{\infty}\frac{\tilde{r}^{s}(1-\frac{r}{\tilde{r}})^{s}}{\tilde{r}^{2s+1}}d\tilde{r}dr
\leq\int_{0}^{1}\int_{r}^{\infty}\frac{\tilde{r}^{s}(1-s\frac{r}{\tilde{r}})}{\tilde{r}^{2s+1}}d\tilde{r}dr
\\&=\int_{0}^{1}\int_{r}^{\infty}\frac{1}{\tilde{r}^{s+1}}d\tilde{r}dr+
\int_{0}^{1}\int_{r}^{\infty}\frac{-sr}{\tilde{r}^{s+2}}d\tilde{r}dr
 <\infty.
\end{aligned}
\end{equation*}
Then we have
\begin{equation}\label{g13}
\frac{\int_{0}^{1}|A(\sigma,t\rho_{t}r)Jac\Psi(\sigma,t\rho_{t}r)t\rho_{t}|dr}{t}|\leq
C_{0}^{3}a_{n,s}c_{s}|(1+o(t))\psi_{t}|^{2}|\rho_{t}|.
\end{equation}

By the uniform continuity of $\rho_{t}$ and $\psi_{t}$, then we have
\begin{equation}\label{g14}
\begin{aligned}
&\lim\limits_{t\rightarrow0}C_{0}^{3}a_{n,s}c_{s}\int_{\partial \Omega_{t}}|(1+o(t))\psi_{t}|^{2}|\rho_{t}|d\mathcal{H}^{n-1}(\sigma)\\
&=C_{0}^{3}a_{n,s}c_{s}\int_{\partial\Omega}\psi^{2}(\sigma)h_{L}(\nu(\sigma))d\mathcal{H}^{n-1}(\sigma).
\end{aligned}
\end{equation}

Together with \eqref{g13},\eqref{g14} and \eqref{g10}, according to the generalized dominated convergence theorem, we have
\begin{equation}\label{g6}
\begin{aligned}
&\lim\limits_{t\rightarrow 0}\frac{\frac{1}{2}c_{n,s}\int_{\partial\Omega_{t}}\int_{0}^{1}A(\sigma,t\rho_{t}r)Jac\Psi(\sigma,t\rho_{t}r)t\rho_{t}drd\mathcal{H}^{n-1}(\sigma)}{t}\\
&=-\frac{1}{2}c_{n,s}c_{s}a_{n,s}\int_{\partial \Omega}(\frac{u}{\delta^{s}}(\sigma))^{2}h_{L}(\nu(\sigma))d\mathcal{H}^{n-1}(\sigma)
\end{aligned}
\end{equation}

Next we estimate $II$.
There exists $\varepsilon'\in (0,\frac{\varepsilon}{2})$ such that
\begin{equation}\label{g1}
|\sigma-y|\geq 3\varepsilon' \ \ \ \ \ \ \ \ \
\text{for}\ \text{all}\ y\in \ \Omega_{t}\backslash \Psi_{\sigma}[(0,\varepsilon)\times B].
\end{equation}
Moreover, $\varepsilon'$ can be chosen independently of $\sigma\in \partial \Omega_{t}$.

When $t$ is sufficiently small, $|t\rho_{t}r|<\varepsilon'$. By \eqref{g1}, we have
\begin{equation*}
\begin{aligned}
|\Psi(\sigma,t\rho_{t}r)-y|&=|\sigma+t\rho_{t}r\nu_{t}(\sigma)-y|\geq |\sigma-y|-|t\rho_{t}r|\\
&\geq\frac{|\sigma-y|}{3}+\frac{2|\sigma-y|}{3}-\varepsilon'\\
&\geq\frac{|\sigma-y|}{3}+\varepsilon'.
\end{aligned}
\end{equation*}

Since $|u|\leq C |t \rho_{t}r|^{s}\leq C |t\rho_{t}|^{s}$ for $r\in [0,1]$ and $u_{t}\in L^{\infty}(\R^{n})$, we have
\begin{equation*}
\begin{aligned}
|B(\sigma,t\rho_{t}r)|
&\leq C\|\rho_{t}\|_{L^{\infty}(\partial\Omega_{t})}|t|^{s}\|u_{t}\|_{L^{\infty}(\R^{n})}\int_{\R^{n}}\frac{1}{|\frac{|\sigma-y|}{3}+\varepsilon'|^{n+2s}}dy\\
&\leq C|t|^{s}.
\end{aligned}
\end{equation*}
Thus,
\begin{equation*}
\begin{aligned}
|\int_{\partial\Omega_{t}}\int_{0}^{1}B(\sigma,t\rho_{t}r)Jac\Psi(\sigma,t\rho_{t}r)t\rho_{t}drd\mathcal{H}^{n-1}(\sigma)|\leq C |t|^{s+1},
\end{aligned}
\end{equation*}
where $C$ is a  distinct positive constant.

Moreover, we have
\begin{equation}\label{g5}
\lim\limits_{t\rightarrow0}\frac{\frac{1}{2}c_{n,s}\int_{\partial\Omega_{t}}\int_{0}^{1}B(\sigma,t\rho_{t}r)Jac\Psi(\sigma,t\rho_{t}r)t\rho_{t}drd\mathcal{H}^{n-1}(\sigma)\\
}{t}=0.
\end{equation}
Together with  \eqref{g6} and \eqref{g5}, we have
\begin{equation*}
\frac{d}{dt}J_{f}(\Phi_t(\Omega))\Big|_{t=0}=-\frac{1}{2}c_{s}c_{n,s}a_{n,s}\int_{\partial \Omega}|\frac{u_{\Omega,f}}{\delta^{s}}(\sigma)|^{2}h_{L}(\nu(\sigma)) d\mathcal{H}^{n-1}(\sigma).
\end{equation*}
\end{proof}

\subsection{The Hadamard variational formula for Riesz capacity}
In this subsection, we will derive the Hadamard variational formula for Riesz capacity.

\begin{theorem}\label{A6}
Let $\Omega$ and $L$ be two bounded convex bodies of class $C^{2}$ with support functions $h_{\Omega}$ and $h_{L}$ respectively.
Let $\Omega_{t}$ be the Wulff shape of  $h_{t}=h_{\Omega}+th_{L}$ and $\nu$ be the Gauss map of  $\Omega$. Suppose $u_{\Omega}$ is the minimizer of $Cap_{\alpha}(\Omega)$, then we have
\begin{equation*}
\frac{d}{dt} \mathrm{Cap}_{\alpha}(\Omega_{t})\Big{|}_{t=0} =c_{0}\int_{\partial \Omega}|\partial_{\nu}^{\alpha/2}u_{\Omega}(\sigma)|^{2}h_{L}(\nu_{\Omega}(\sigma))d\mathcal{H}^{n-1}(\sigma),
\end{equation*}
where $c_{0}=c_{\alpha/2}a_{n,\alpha/2}c_{n,\alpha/2}$ is a positive constant.
\end{theorem}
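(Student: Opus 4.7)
The plan is to reduce the variational computation to one structurally identical to the proof of Theorem \ref{A1}. For $t>0$ sufficiently small the Wulff shape $\Omega_t$ of $h_t=h_\Omega+th_L$ contains $\Omega$ (outward deformation). Let $u_0=u_\Omega$ and $u_t=u_{\Omega_t}$; both equal $1$ on $\Omega\subset\Omega_t$. Using the self-adjointness of $(-\Delta)^{\alpha/2}$ on $\mathring H^{\alpha/2}(\R^n)$, together with the identities $\mathrm{Cap}_\alpha(\Omega)=\int_\Omega(-\Delta)^{\alpha/2}u_0\,dx$ and $(-\Delta)^{\alpha/2}u_0=0$ on $\R^n\setminus\Omega$, I would first derive the key identity
\[
\mathrm{Cap}_\alpha(\Omega_t)-\mathrm{Cap}_\alpha(\Omega)=\int_{\Omega_t\setminus\Omega}(1-u_0(x))(-\Delta)^{\alpha/2}u_t(x)\,dx.
\]

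Next, I would turn this into a double integral matching the one handled in Theorem \ref{A1}. For $x\in\Omega_t\setminus\Omega\subset\Omega_t$ one has $u_t(x)=1$, so the pointwise formula reads $(-\Delta)^{\alpha/2}u_t(x)=c_{n,\alpha/2}\int_{\R^n\setminus\Omega_t}(1-u_t(y))|x-y|^{-(n+\alpha)}\,dy$, which yields
\[
\mathrm{Cap}_\alpha(\Omega_t)-\mathrm{Cap}_\alpha(\Omega)=c_{n,\alpha/2}\int_{\Omega_t\setminus\Omega}\int_{\R^n\setminus\Omega_t}\frac{(1-u_0(x))(1-u_t(y))}{|x-y|^{n+\alpha}}\,dy\,dx.
\]
This is the exact analog of the dominant term $\tfrac12 c_{n,s}\int_{\Omega\setminus\Omega_t}\int_{\Omega_t}\frac{u(x)u_t(y)}{|x-y|^{n+2s}}dy\,dx$ appearing in the proof of Theorem \ref{A1}, via the substitutions $u\mapsto 1-u_0$ and $u_t\mapsto 1-u_t$: both $1-u_0$ and $1-u_t$ vanish on $\partial\Omega$ and $\partial\Omega_t$ respectively with the same $\alpha/2$-power boundary asymptotics and with the same boundary traces $|\partial_\nu^{\alpha/2}u_0|$ and $|\partial_\nu^{\alpha/2}u_t|$ as the corresponding traces in Theorem \ref{A1}.

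I would then replay Theorem \ref{A1}'s proof essentially verbatim: parametrize the thin shell $\Omega_t\setminus\Omega$ by $\Psi(\sigma,r)=\sigma+r\nu(\sigma)$ with $\sigma\in\partial\Omega$ and $r\in[0,th_L(\nu(\sigma))]$; split the $y$-integral into a local part over $\Psi_\sigma((0,\varepsilon)\times B)$ and a distant part, using the kernel estimates of Lemma 6.6 in \cite{DF2021}; and rescale $r=th_L\,r'$, $\tilde r=th_L\,\tilde r'$ in the local integrand. The distant part is bounded by $Ct^{\alpha/2+1}$ (from $|1-u_0|\leq Ct^{\alpha/2}$ and $|1-u_t|\leq 1$) and hence vanishes after dividing by $t$. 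The local part, combined with an exterior analog of Lemma \ref{A7} giving $|\partial_\nu^{\alpha/2}u_t|\to|\partial_\nu^{\alpha/2}u_0|$ uniformly at corresponding boundary points as $t\to 0$, converges by dominated convergence to the desired boundary integral with exactly the same universal constants $c_{\alpha/2}$, $a_{n,\alpha/2}$, $c_{n,\alpha/2}$ that appear in Theorem \ref{A1}. The disappearance of the $-\tfrac12$ prefactor of Theorem \ref{A1} is due to $\mathrm{Cap}_\alpha(\Omega)$ being $\int u_\Omega(-\Delta)^{\alpha/2}u_\Omega\,dx$ rather than $-\tfrac12\int uf\,dx$, and the positive sign matches the monotonicity $\mathrm{Cap}_\alpha(\Omega_t)>\mathrm{Cap}_\alpha(\Omega)$ for outward deformations.

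The main obstacle is that the $y$-domain $\R^n\setminus\Omega_t$ is now unbounded, whereas Theorem \ref{A1}'s analogous integration was over the bounded set $\Omega_t$. This forces an additional far-field tail estimate (which follows immediately from $|1-u_t|\leq 1$ together with the integrability of $|x-y|^{-(n+\alpha)}$ at infinity) and, more substantively, an exterior analog of Lemma \ref{A7} establishing the continuity in $t$ of $|\partial_\nu^{\alpha/2}u_t|$ on $\partial\Omega_t$ at corresponding boundary points; this should follow from the boundary regularity theory of \cite{RS2017} applied to the exterior Dirichlet problem satisfied by $1-u_t$ on $\R^n\setminus\Omega_t$.
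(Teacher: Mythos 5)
Your proposal is correct in outline, but it takes a genuinely different route from the paper. The paper's own proof of Theorem \ref{A6} is a short algebraic reduction: fix $\varphi\in C_c^{\infty}(\R^n)$ with $\varphi\equiv 1$ in a neighborhood of $\Omega$, set $f=-(-\Delta)^{\alpha/2}\varphi$, and observe that $\tfrac12\mathrm{Cap}_{\alpha}(\Omega)=\tfrac12\|\varphi\|_{\mathring{H}^{\alpha/2}(\R^n)}^2+J_f(\Omega^{c})$ with $u_{\Omega}=v_{\Omega^{c},f}+\varphi$; since $\varphi\equiv 1$ near $\partial\Omega$ the fractional traces of $u_\Omega$ and $v_{\Omega^c,f}$ coincide, so Theorem \ref{A1}, applied to the unbounded open set $\Omega^{c}$ (for which Lemma \ref{A7} and the kernel analysis were already set up), yields the formula with no further analysis. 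You bypass the auxiliary functional entirely: your increment identity $\mathrm{Cap}_\alpha(\Omega_t)-\mathrm{Cap}_\alpha(\Omega)=\int_{\Omega_t\setminus\Omega}(1-u_\Omega)(-\Delta)^{\alpha/2}u_t\,dx$ is correct (it follows from the symmetry of the form and $u_t\equiv 1$ on $\Omega\subset\Omega_t$, exactly the kind of manipulation the paper performs in \eqref{g2}, and with the same implicit, standard fact that $(-\Delta)^{\alpha/2}u_t$ contributes no singular mass on $\partial\Omega_t$ when written as a bulk integral), and the subsequent shell/kernel computation is a faithful transplant of the proof of Theorem \ref{A1}, with the extra tail estimate over the unbounded $y$-domain handled as you indicate. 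What the paper's reduction buys is economy: nothing new has to be proved at this stage, the factor-of-two and sign bookkeeping are automatic, and the continuity of the fractional normal derivative in $t$ is already available because Lemma \ref{A7} is stated for unbounded open sets with compact boundary. What your route buys is a self-contained, manifestly monotone formula for the capacity increment without introducing $\varphi$ and $f$; note that your ``exterior analog of Lemma \ref{A7}'' need not be re-derived from \cite{RS2017}: applying the paper's Lemma \ref{A7} to $v_t=u_t-\varphi$ and using $1-u_t=-v_t$ near $\partial\Omega_t$ gives exactly the trace convergence you need.
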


\begin{proof}
Let $\varphi$ be a fixed function satisfying $\varphi\in C_{c}^{\infty}(\R^n)$  and $\varphi=1$  in an open neighborhood of $\Omega$. We define
\begin{equation*}
f(x)=-(-\Delta)^{s}\varphi(x),
\end{equation*}
then $f$ is well-defined and $f\in C^{\infty}(\R^n)\cap L^{\frac{2n}{n+2s}}(\R^{n}) \cap L^{\infty}(\R^{n})$, see section \ref{sec2}.  Assume that $u$ is a test function in the definition of $\text{Cap}_{\alpha}(\Omega)$ and $u=1$ on $\Omega$. Let $v=u-\varphi$, then $v$ satisfies the following equations

\begin{equation*}
\left\{
\begin{aligned}
& (-\Delta)^{s}v =f, \ \ \ \ \ \ \ \ \ \ \ \mathrm{in} \ \Omega^{c},  \\
&  v=0  , \ \ \ \ \ \ \ \ \ \ \ \ \ \ \ \ \ \ \ \ \mathrm{in} \ \Omega. \\
\end{aligned}
\right.
\end{equation*}
Then, we have
\begin{equation*}
\begin{aligned}
&\frac{1}{2}\|u\|_{\mathring{H}^{s}(\R^{n})}^{2}=\frac{1}{2}\|v+\varphi\|_{\mathring{H}^{s}(\R^{n})}^{2}\\
&=\frac{1}{2}\|v\|_{\mathring{H}^{s}(\R^{n})}^{2}+\frac{1}{2}\|\varphi\|_{\mathring{H}^{s}(\R^{n})}^{2}-\int_{\R^{n}}v(x)f(x)dx.
\end{aligned}
\end{equation*}
Furthermore, we obtain
\begin{equation*}
\frac{1}{2}\text{Cap}_{\alpha}(\Omega)=\frac{1}{2}\|\varphi\|_{\mathring{H}^{s}(\R^n)}^{2}+J_{f}(\Omega^{c}).
\end{equation*}
Let $u_\Omega$ be the minimizer of $\text{Cap}_{\alpha}(\Omega)$, then $u_{\Omega}=v_{\Omega^{c},f}+\varphi$, where $v_{\Omega^{c},f}$ is the minimizer of $I_{\Omega^{c},f}$. Since $\varphi=1$ in an open neighborhood of $\Omega$, then $\frac{\varphi}{\delta^{s}}(x)=0$ and $\frac{u_{\Omega}}{\delta^{s}}(x)=\frac{v_{\Omega^{c},f}}{\delta^{s}}(x)$ for all $x\in \partial \Omega$. Since $|\frac{u_{\Omega}}{\delta^{s}}(x)|=|\partial _{\nu}^{s}u(x)|$ for $x\in \partial\Omega$, it follows from theorem \ref{A1} that
\begin{equation*}
\begin{aligned}
\frac{d}{dt}  \mathrm{Cap}_{\alpha}(\Omega_{t})\Big{|}_{t=0}&=c_{0}\int_{\partial \Omega}|\frac{u_{\Omega}}{\delta^{s}}(\sigma)|^{2}h_{L}(\nu_{\Omega}(\sigma))d\mathcal{H}^{n-1}(\sigma)\\
&=c_{0}\int_{\partial \Omega}|\partial _{\nu}^{\alpha/2}u(\sigma)|^{2}h_{L}(\nu_{\Omega}(\sigma))d\mathcal{H}^{n-1}(\sigma)
\end{aligned}
\end{equation*}
where $c_{0}=c_{\alpha/2}a_{n,\alpha/2}c_{n,\alpha/2}$.
\end{proof}

\section{overdetermined problem for riesz capacity}\label{sec4}

In this section, we first provide a characterization of constrained local minimum for $\text{Cap}_{\alpha}$. We define a bounded open subset $\Omega$ as a constrained local minimum for $\text{Cap}_{\alpha}$ if, for all families of deformations $\Phi_{t}$ satisfying \eqref{b28} and the volume invariance condition $|\Phi_{t}(\Omega)|=|\Omega|$ for $t\in (-1,1)$, there exists $t_{0}\in (0,1)$ with $\text{Cap}_{\alpha}(\Phi_{t}(\Omega))\geq \text{Cap}_{\alpha}(\Omega)$ for $t\in (-t_{0},t_{0})$.

\begin{definition}
We say that $\Omega\in \mathcal{K}_{0}^{n}$ is stationary for a functional $F:\mathcal{K}_{o}^{n}\rightarrow\R^{+}$, if
\begin{equation*}
\frac{d}{dt}F((1-t)\Omega+tL)\big{|}_{t=0^{+}}=0,\ \ \text{for}\ \text{any} \ L\in \mathcal{K}_{o}^{n}.
\end{equation*}
\end{definition}

 Our result is as follows.
\begin{theorem}
If a bounded convex domain $\Omega \in \mathcal{K}_{o}^{n}$ with $C^{2}$ boundary is a volume constrained local minimum for $\Omega\rightarrow \text{Cap}_{\alpha}(\Omega)$, then $\Omega$ is a ball.
\end{theorem}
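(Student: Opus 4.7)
\textbf{Proof proposal for Theorem \ref{A8}.} The plan is to combine the Hadamard variational formula from Theorem \ref{A6} with the standard first variation of volume and convert the volume constrained local minimality into a pointwise overdetermined condition, after which a Serrin-type symmetry result for the fractional exterior problem finishes the argument. First, I would record the volume variation: for $\Omega_t$ the Wulff shape of $h_\Omega + t h_L$ with $\Omega, L \in \mathcal{K}_o^n$ of class $C^2$,
\begin{equation*}
\frac{d}{dt}|\Omega_t|\bigl|_{t=0} = \int_{S^{n-1}} h_L(u)\, dS(\Omega, u) = \int_{\partial\Omega} h_L(\nu_\Omega(x))\, d\mathcal{H}^{n-1}(x),
\end{equation*}
the last equality coming from the fact that $S(\Omega, \cdot)$ is the pushforward of $\mathcal{H}^{n-1}|_{\partial\Omega}$ under the Gauss map in the smooth case.

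Next, I would exploit the volume constraint via a Lagrange-multiplier argument. Given any $L \in \mathcal{K}_o^n$ of class $C^2$, one can build a one-parameter family of deformations by a simultaneous scaling of $\Omega_t$ so that $|\tilde\Omega_t| \equiv |\Omega|$; the constrained minimality of $\Omega$ forces $\tfrac{d}{dt}\mathrm{Cap}_\alpha(\tilde\Omega_t)|_{t=0} = 0$. Using the 1-homogeneity of $\mathrm{Cap}_\alpha$ (or arguing directly with variations $L_1 - L_2$ that are infinitesimally volume preserving), one extracts a constant $\lambda \in \R$ such that for every admissible $L$,
\begin{equation*}
c_0 \int_{\partial\Omega} |\partial_\nu^{\alpha/2} u_\Omega(\sigma)|^2\, h_L(\nu_\Omega(\sigma))\, d\mathcal{H}^{n-1}(\sigma) \;=\; \lambda \int_{\partial\Omega} h_L(\nu_\Omega(\sigma))\, d\mathcal{H}^{n-1}(\sigma).
\end{equation*}
Because differences of support functions of $C^2$ convex bodies are dense in $C(S^{n-1})$ (any smooth function on $S^{n-1}$ is a difference of two support functions after adding a large multiple of $|\cdot|$), and because the Gauss map $\nu_\Omega : \partial \Omega \to S^{n-1}$ is continuous and surjective, the test functions $h_L \circ \nu_\Omega$ span a dense subspace of $C(\partial\Omega)$. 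The identity above then yields, pointwise on $\partial\Omega$,
\begin{equation*}
c_0 |\partial_\nu^{\alpha/2} u_\Omega(x)|^2 = \lambda.
\end{equation*}

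At this point $u_\Omega$ satisfies the system \eqref{b3} together with the overdetermined condition that $|\partial_\nu^{\alpha/2} u_\Omega|$ is constant on $\partial\Omega$. To conclude that $\Omega$ is a ball, I would invoke a Serrin-type symmetry theorem for the exterior fractional Dirichlet problem --- precisely the kind of result developed by Fall--Jarohs and by Soave--Valdinoci via the moving planes method for $(-\Delta)^{s}$ on unbounded domains (cited in the excerpt). The constant Neumann-type data $|\partial_\nu^{\alpha/2} u_\Omega|^2 = \lambda/c_0$ is exactly the hypothesis needed to run moving planes in every direction, forcing $\Omega$ to be symmetric with respect to every hyperplane through its center of mass, hence a ball.

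\textbf{Main obstacle.} The delicate step is the passage from the integral identity to a pointwise equality. This requires (i) a careful density argument — verifying that the class $\{h_L \circ \nu_\Omega : L \in \mathcal{K}_o^n \cap C^2\}$ is dense in, say, $C(\partial\Omega)$, which in turn uses strict convexity of $\Omega$ (so that $\nu_\Omega$ is a homeomorphism) to push Schneider's density of support functions on $S^{n-1}$ down to $\partial\Omega$ — and (ii) the implicit regularity $|\partial_\nu^{\alpha/2} u_\Omega|^2 \in C(\partial\Omega)$, which follows from Lemma \ref{A7} together with the boundary regularity of $u_\Omega/\delta^{s}$ from \cite{RS2014}. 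If $\Omega$ is only convex but not strictly convex, one can first perturb or justify that $C^2$ regularity of the boundary already gives strict convexity along the support of the area measure; otherwise the density argument must be refined to handle test functions modulo the kernel of $L \mapsto h_L \circ \nu_\Omega$.
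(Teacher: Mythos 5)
Your proposal follows essentially the same route as the paper: the Lagrange multiplier argument combining the Hadamard formula of Theorem \ref{A6} with the first variation of volume, arbitrariness of $h_L$ to force $|\partial_\nu^{\alpha/2}u_\Omega|^2$ to be constant on $\partial\Omega$, and then the fractional Serrin-type symmetry theorem (the paper invokes Theorem 1.1 of \cite{SV2019}) to conclude that $\Omega$ is a ball. Your density discussion for passing from the integral identity to the pointwise one is in fact more careful than the paper's terse ``since $L$ is arbitrary,'' and the only slip --- calling $\mathrm{Cap}_\alpha$ $1$-homogeneous, whereas it is homogeneous of degree $n-\alpha$ --- is harmless, since any nonzero homogeneity degree (or your alternative of infinitesimally volume-preserving variations) yields the multiplier.
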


\begin{proof}
We define the functional
\begin{equation*}
L(\Omega)=\text{Cap}_{\alpha}(\Omega)+\lambda(V(\Omega)-c).
\end{equation*}
If $\Omega$ is a stationary point for $L$, then
\begin{equation*}
\begin{aligned}
\frac{d}{dt}L(\Omega+tL)\big{|}_{t=0^{+}}=\frac{d}{dt}\text{Cap}_{\alpha}(\Omega+tL)\big{|}_{t=0^{+}}+\lambda\frac{d}{dt}V(\Omega+tL)\big{|}_{t=0^{+}}=0.\\
\end{aligned}
\end{equation*}
From theorem \ref{A6}, we have
\begin{equation}\label{a28}
\begin{aligned}
&\frac{d}{dt}\text{Cap}_{\alpha}(\Omega+tL)\big{|}_{t=0^{+}}\\
&=c_{0}
\int_{\partial \Omega}|\partial_{\nu}^{\alpha/2}u(x)|^{2}h_{L}(\nu(x))d\mathcal{H}^{n-1}(x).
\end{aligned}
\end{equation}
It is also well-known that
\begin{equation*}
\begin{aligned}
&\frac{d}{dt}V(\Omega+tL)\big{|}_{t=0^{+}}\\
&=\int_{\partial \Omega}h_{L}(\nu(x))d\mathcal{H}^{n-1}(x).
\end{aligned}
\end{equation*}
Therefore, we obtain
\begin{equation*}
\begin{aligned}
\int_{\partial \Omega}h_{L}(\nu(x))\Big{[}|c_{0}\partial_{\nu}^{\alpha/2}u(x)|^{2}+\lambda \Big{]}d\mathcal{H}^{n-1}(x)=0.
\end{aligned}
\end{equation*}
Since $L$ is arbitrary, then
\begin{equation*}
|\partial_{\nu}^{\alpha/2}u(x)|^{2}=-\frac{\lambda}{c_{0}}.
\end{equation*}
By the theorem 1.1 of \cite{SV2019}, we obtain that $\Omega$ is a ball.
\end{proof}

Furthermore, we study another overdetermined problem for $\text{Cap}_{1}(\Omega)$ in a convex body $\Omega$.
  We replace the volume $V(\Omega)$ by mean width $\text{M}(\Omega)$. The mean width of $\Omega$ is defined by
\begin{equation*}
\text{M}(\Omega):=\frac{2}{\omega_{n}}\int_{S^{n-1}}h_{\Omega}(u)du=\frac{2}{\omega_{n}}\int_{\partial \Omega}h_{\Omega}(\nu(x))G(x)d\mathcal{H}^{n-1}(x),
\end{equation*}
where $G(x)$ is the Gaussian curvature of $\partial \Omega$ at $x$.

We introduce the extremum problem for $\text{Cap}_{1}(\Omega)$ with constraint of fixed mean width $\text{M}(\Omega)=c$.
We define the functional
\begin{equation*}
\tilde{L}(\Omega)=\text{Cap}_{1}(\Omega)+\lambda(M(\Omega)-c).
\end{equation*}
If $\Omega$ is a stationary point for $L$, then
\begin{equation*}
\begin{aligned}
\frac{d}{dt}\tilde{L}(\Omega+tL)\big{|}_{t=0^{+}}=\frac{d}{dt}\text{Cap}_{1}(\Omega+tL)\big{|}_{t=0^{+}}+\lambda\frac{d}{dt}M(\Omega+tL)\big{|}_{t=0^{+}}=0\\
\end{aligned}
\end{equation*}
It follows from Theorem \ref{A6}  that
\begin{equation}\label{a28}
\begin{aligned}
&\frac{d}{dt}\text{Cap}_{1}(\Omega+tL)\big{|}_{t=0^{+}}\\
&=c_{0}
\int_{\partial \Omega}|\partial_{\nu}^{1/2}u(x)|^{2}h_{L}(\nu(x))d\mathcal{H}^{n-1}(x).
\end{aligned}
\end{equation}
It is well-known that
\begin{equation*}
\begin{aligned}
&\frac{d}{dt}\text{M}(\Omega+tL)\big{|}_{t=0^{+}}\\
&=\frac{2}{\omega_{n}}\int_{\partial \Omega}h_{L}(\nu(x))G(x)d\mathcal{H}^{n-1}(x).
\end{aligned}
\end{equation*}
Therefore, we obtain
\begin{equation*}
\begin{aligned}
\int_{\partial \Omega}h_{L}(\nu(x))\Big{[}c_{0}|\partial_{\nu}^{1/2}u(x)|^{2}+\frac{2\lambda}{\omega_{n}}G(x)\Big{]}d\mathcal{H}^{n-1}(x)=0.
\end{aligned}
\end{equation*}
Since $L$ is arbitrary, then
\begin{equation*}
|\partial_{\nu}^{1/2}u(x)|^{2}=-\frac{2\lambda}{c_{0}\omega_{n}}G(x).
\end{equation*}
Thus, this gives the following overdetermined problem
\begin{equation}\label{a25}
\left\{
\begin{aligned}
&(-\Delta)^{\frac{1}{2}}u=0\ \ \ \ \ \ \ \ \ \ \ \ \ \ \ \ \ \ \ \ \ \ in \ \R^{n}\setminus \bar{\Omega},\\
&u=1  \ \ \ \ \ \ \ \ \ \ \ \ \ \ \ \ \ \ \ \ \ \ \ \ \ \ \ \ \ \ \ on  \ \partial \Omega,\\
&\lim\limits_{|x|\rightarrow \infty}u(x)=0,\\
& |\partial_{\nu}^{\frac{1}{2}}u(x)|^{2}=cG(x)\ \ \ \ \ \ \ \ \ \ \ \ \ \ on\ \partial \Omega.
\end{aligned}
\right.
\end{equation}

We now introduce the following definitions as posed in \cite{F2012}.
\begin{definition}
We say that $F: \mathcal{K}_{o}^{n}\rightarrow \R^{+}$ is a Brunn-Minkowski functional of degree $\alpha$, if\\
(i) F is rigid motion invariant:
\begin{equation*}
F(\phi(\Omega))=F(\Omega),\ \ \ \ \forall \Omega\in \mathcal{K}_{o}^{n},\ \forall \phi:\R^{n}\rightarrow \R^{n}\ \text{rigid} \ \text{motion};\\
\end{equation*}
(ii) F is Hausdorff continuous:
\begin{equation*}
F(\Omega_{n})\rightarrow F(\Omega),\ \ \ \ \text{whenever}\ \Omega_{n}\rightarrow \Omega\ \text{in}\ \text{Hausdorff}\ \text{distance};\\
\end{equation*}
(iii) F is Minkowski differentiable:
\begin{equation*}
\exists \frac{d}{dt}F((1-t)\Omega+tL)|_{t=0^{+}},\ \ \forall\ \Omega,L\in \mathcal{K}_{o}^{n};\\
\end{equation*}
(iv) F is $\alpha$-homogeneous for some $\alpha\neq 0$:
\begin{equation*}
F(t\Omega)=t^{\alpha}F(\Omega),\ \ \ \forall \Omega\in \mathcal{K}_{o}^{n},\forall t\in \R^{+};\\
\end{equation*}
(v) F satisfies the Brunn-Minkowski inequality:
\begin{equation*}
F^{1/\alpha}(\Omega+L)\geq F^{1/\alpha}(\Omega)+F^{1/\alpha}(L),\ \ \forall \Omega,L\in \mathcal{K}_{o}^{n},
\end{equation*}
with equality if and only if $\Omega$ and $L$ are homothetic.
\end{definition}
The Brunn-Minkowski inequality for 1-Riesz capacity was established for $n\geq 2$ in  \cite{NR2015}. Equality for $n = 2$ was demonstrated in \cite{MNR2018}, and its extension to higher-dimensional cases was also verified in \cite{QZ2023}.
Thus, $\text{Cap}_{1}(\Omega)$ is a Brunn-Minkowski functional of degree $n-1$.

The following lemma is important for characterizing the ball in \cite{F2012}.

\begin{lemma}\label{A5}
Let $F:\mathcal{K}_{o}^{n}\rightarrow \R^{+}$ be a Brunn-Minkowski functional of degree $\alpha$. If $K\in \mathcal{K}_{o}^{n}$ is a stationary domain for the functional $\frac{F(K)^{1/\alpha}}{M(K)}$, then $K$ is a ball.
\end{lemma}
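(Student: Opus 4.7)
The plan is to show that stationarity of $\Psi(K):=F(K)^{1/\alpha}/M(K)$ forces $K$ to be a global maximum of $\Psi$ on $\mathcal{K}_o^n$, and then to exploit the rotational invariance of $F$ and $M$ together with the equality case of Brunn--Minkowski to conclude that $K$ is a ball. First, observe that $\Psi$ is $0$-homogeneous (since $F^{1/\alpha}$ and $M$ are each $1$-homogeneous) and invariant under rigid motions.

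To carry out the first step, I would exploit the identity $(1-t)K+tL=(1-t)(K+\tfrac{t}{1-t}L)$ together with the $0$-homogeneity of $\Psi$ to convert stationarity along the convex combination into the statement $\frac{d}{ds}\Psi(K+sL)\big|_{s=0^+}=0$ for every $L\in\mathcal{K}_o^n$. The Brunn--Minkowski inequality (v) and the linearity $M(K+sL)=M(K)+sM(L)$ give the lower bound
\[ \Psi(K+sL) \,\geq\, \phi(s):=\frac{F^{1/\alpha}(K)+sF^{1/\alpha}(L)}{M(K)+sM(L)}, \qquad s\geq 0, \]
with equality at $s=0$. Comparing right derivatives at $s=0$ (using stationarity on the left and a direct computation of $\phi'(0)=[F^{1/\alpha}(L)M(K)-F^{1/\alpha}(K)M(L)]/M(K)^2$ on the right) yields $\Psi(L)\leq \Psi(K)$. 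Since $L$ was arbitrary, $K$ is a global maximum of $\Psi$.

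For the second step, fix any rotation $R$ about the origin. By (i) and the rotational invariance of $M$, $\Psi(RK)=\Psi(K)$, so $RK$ is also a global maximizer. Applying Brunn--Minkowski to the pair $K, RK$ and using $M(K+RK)=2M(K)$ gives $\Psi(K+RK)\geq \Psi(K)$; maximality then forces equality in BM, and the equality case in (v) implies $RK=\lambda_R K + v(R)$ for some $\lambda_R>0$ and $v(R)\in\R^n$. Comparing mean widths (or volumes) on both sides gives $\lambda_R=1$, and a centroid computation using $c(RK)=Rc$ and $c(K+v)=c+v$ yields $v(R)=(R-I)c$, where $c$ is the centroid of $K$. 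Hence $R(K-c)=K-c$ for every rotation $R$ about the origin, so the convex body $K-c$ is invariant under all such rotations, forcing it to be a Euclidean ball centered at the origin. Thus $K$ is a ball.

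The main obstacle I anticipate is the rotation-comparison step, which crucially uses the \emph{equality} characterization in Brunn--Minkowski (property (v)): without the characterization of equality as homothety, one cannot turn the abstract rotational invariance of $F$ and $M$ into a concrete pointwise symmetry of $K$. A secondary technicality is justifying the one-sided derivative comparison in the first step, which only requires the Minkowski differentiability of $\Psi$ inherited from (iii) and the fact that $\Psi(K+s\,\cdot\,L)$ dominates $\phi$ while agreeing with it at $s=0$.
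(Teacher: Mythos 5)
Your proof is correct; note that the paper itself states Lemma \ref{A5} without reproducing a proof (it cites Fragal\`a \cite{F2012}), and your argument is essentially the standard one from that source: stationarity plus the Brunn--Minkowski property forces equality in the inequality between $K$ and a rotated copy $RK$, the equality case gives $RK=\lambda K+v$, the mean-width comparison gives $\lambda=1$, and the centroid (Steiner point) computation makes $K-c$ invariant under all rotations, hence a ball. The only cosmetic difference is that you first upgrade stationarity to global maximality of $F^{1/\alpha}/M$ over all $L\in\mathcal{K}_{o}^{n}$ before specializing to $L=RK$, whereas the cited argument works directly along the segment $(1-t)K+tRK$, on which $M$ is constant and concavity of $F^{1/\alpha}$ together with the vanishing one-sided derivative forces constancy; both routes rest on the same ingredients.
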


Now we solve the following overdetermined problem.

\begin{theorem}
Let $\Omega\in \mathcal{K}_{o}^{n}$ be a bounded convex domain of class $C^{2}$, if there exists a solution $u$  to the following overdetermined boundary value problem
\begin{equation}\label{a25}
\left\{
\begin{aligned}
&(-\Delta)^{\frac{1}{2}}u=0\ \ \ \ \ \ \ \ \ \ \ \ \ \ \ \ \ \  \ \ in \ \ \ \R^{n}\setminus \bar{\Omega},\\
&u=1  \ \ \ \ \ \ \ \ \ \ \ \ \ \ \ \ \ \ \ \ \ \ \ \ \ \ \ \  \ on  \ \partial \Omega,\\
&\lim\limits_{|x|\rightarrow \infty}u(x)=0,\\
& |\partial_{\nu}^{\frac{1}{2}}u(x)|^{2}=cG(x)\ \ \ \ \ \ \ \ \ \ \ \ \ \ on\ \partial \Omega,
\end{aligned}
\right.
\end{equation}
then $\Omega$ is a ball.
\end{theorem}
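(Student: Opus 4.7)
The plan is to combine the Hadamard variational formula of Theorem \ref{A6} with the Brunn--Minkowski characterization of balls in Lemma \ref{A5}. Following Fragal\`a's strategy for the Newtonian capacity, the key observation is that the overdetermined condition forces $\Omega$ to be a critical point of the scale-invariant ratio
\[
\Lambda(\Omega) := \frac{\text{Cap}_1(\Omega)^{1/(n-1)}}{\text{M}(\Omega)}
\]
on $\mathcal{K}_o^n$, after which Lemma \ref{A5} applies directly with $F=\text{Cap}_1$ and $\alpha=n-1$, since $\text{Cap}_1$ is a Brunn--Minkowski functional of degree $n-1$ (inequality from \cite{NR2015}; equality case from \cite{MNR2018,QZ2023}).

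To establish stationarity, I would fix an arbitrary $L \in \mathcal{K}_o^n$ and combine Theorem \ref{A6} with the overdetermined condition $|\partial_\nu^{1/2}u(x)|^2 = cG(x)$ to obtain
\[
\frac{d}{dt}\text{Cap}_1(\Omega+tL)\Big|_{t=0^+} = c\,c_0 \int_{\partial \Omega} G(x)\, h_L(\nu(x))\, d\mathcal{H}^{n-1}(x),
\]
while the standard mean-width formula gives
\[
\frac{d}{dt}\text{M}(\Omega+tL)\Big|_{t=0^+} = \frac{2}{\omega_n}\int_{\partial \Omega} G(x)\, h_L(\nu(x))\, d\mathcal{H}^{n-1}(x).
\]
Hence the two first variations are proportional on $\mathcal{K}_o^n$, with proportionality factor $c\,c_0\,\omega_n/2$ that depends on $\Omega$ but is independent of $L$.

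To promote this proportionality to stationarity of $\Lambda$, I would test the two identities with $L=\Omega$. Using the homogeneities $\text{Cap}_1(s\Omega) = s^{n-1}\text{Cap}_1(\Omega)$ (from the scaling of the half-Laplacian and the variational definition \eqref{c4}) and $\text{M}(s\Omega) = s\,\text{M}(\Omega)$, the two variational identities at $L=\Omega$ collapse to
\[
(n-1)\text{Cap}_1(\Omega) = \frac{c\,c_0\,\omega_n}{2}\text{M}(\Omega),
\]
which pins down the proportionality factor intrinsically in terms of $\Omega$. Substituting back, the logarithmic derivative of $\Lambda$ along any direction $L$ becomes
\[
\frac{d}{dt}\log\Lambda(\Omega+tL)\Big|_{t=0^+} = \frac{1}{n-1}\cdot\frac{(\text{Cap}_1)'(\Omega;L)}{\text{Cap}_1(\Omega)} - \frac{\text{M}'(\Omega;L)}{\text{M}(\Omega)} = 0
\]
for every $L\in \mathcal{K}_o^n$, so $\Omega$ is stationary for $\Lambda$. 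Lemma \ref{A5} then forces $\Omega$ to be a ball.

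The principal point to verify, and essentially the only real obstacle, is that $\text{Cap}_1$ meets all five axioms of a Brunn--Minkowski functional of degree $n-1$. Rigid-motion invariance and $(n-1)$-homogeneity follow from the definition \eqref{b1}; Hausdorff continuity is a byproduct of the convergence arguments underlying Lemma \ref{A7} applied to the capacitary potential; Minkowski differentiability is precisely Theorem \ref{A6}; and both the Brunn--Minkowski inequality and the characterization of its equality case are the nontrivial inputs from \cite{NR2015,MNR2018,QZ2023}. With these in hand, the remainder is a direct algebraic consequence of the Hadamard variational formula.
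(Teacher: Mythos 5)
Your proposal is correct and follows essentially the same route as the paper: use the overdetermined condition together with the Hadamard variational formula of Theorem \ref{A6} and the mean-width variation to show that $\Omega$ is stationary for the scale-invariant ratio $\mathrm{Cap}_{1}(\Omega)^{1/(n-1)}/\mathrm{M}(\Omega)$, then invoke Lemma \ref{A5} with $F=\mathrm{Cap}_{1}$ of degree $n-1$. The only cosmetic difference is that you work with $\Omega+tL$ and pin down $c$ by testing with $L=\Omega$ via homogeneity, while the paper tests the identity $|\partial_{\nu}^{1/2}u|^{2}=cG$ against $h_{\Omega}$ and $h_{L}-h_{\Omega}$ (i.e.\ the combination $(1-t)\Omega+tL$); these are equivalent since the ratio is $0$-homogeneous.
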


\begin{proof}
Assume that $u$ is a solution of problem \eqref{a25}, then
\begin{equation}\label{aa29}
|\partial_{\nu}^{\frac{1}{2}}u(x)|^{2}=cG(x)\ \ \ \ \ on\ \partial \Omega.
\end{equation}

Multiplying both sides of equality \label{aa29} by $h_{\Omega}(\nu(x))$ and integrating over $\partial \Omega$, we have
\begin{equation*}
\int_{\partial \Omega}|\partial_{\nu}^{\frac{1}{2}}u(x)|^{2}h_{\Omega}(\nu(x))d\mathcal{H}^{n-1}(x)=
\int_{\partial \Omega}cG(x)h_{\Omega}(\nu(x))d\mathcal{H}^{n-1}(x).
\end{equation*}
By the definition of $\text{Cap}_{1}(\Omega)$ and $\text{M}(\Omega)$, then
\begin{equation}\label{a26}
c=\frac{2(n-1)\text{Cap}_{1}(\Omega)}{c_{0}\omega_{n}\text{M}(\Omega)}.
\end{equation}
Similarly, multiplying both sides of equality \label{aa29} by $h_{L}(\nu(x))-h_{\Omega}(\nu(x))$ and integrating over $\partial \Omega$, we have
\begin{equation}\label{a27}
\begin{aligned}
&\int_{\partial \Omega}|\partial_{\nu}^{\frac{1}{2}}u(x)|^{2}(h_{L}(\nu(x))-h_{\Omega}(\nu(x)))d\mathcal{H}^{n-1}(x)\\
&=\int_{\partial \Omega}cG(x)(h_{L}(\nu(x))-h_{\Omega}(\nu(x)))\mathcal{H}^{n-1}(x).
\end{aligned}
\end{equation}
By the Hadamard variational formula for $\text{Cap}_{1}(\Omega)$ and $\text{M}(\Omega)$, we obtain
\begin{equation}\label{a28}
\begin{aligned}
&\frac{d}{dt}\text{Cap}_{1}((1-t)\Omega+tL)^{\frac{1}{n-1}}\big{|}_{t=0^{+}}\\
&=\frac{1}{n-1}\text{Cap}_{1}(\Omega)^{\frac{2-n}{n-1}}c_{0}
\int_{\partial \Omega}|\partial_{\nu}^{1/2}u(x)|^{2}(h_{L}(\nu(x))-h_{\Omega}(\nu(x)))d\mathcal{H}^{n-1}(x),
\end{aligned}
\end{equation}
and
\begin{equation}\label{a29}
\begin{aligned}
&\frac{d}{dt}\text{M}((1-t)\Omega+tL)\big{|}_{t=0^{+}}\\
&=\frac{2}{\omega_{n}}\int_{\partial \Omega}(h_{L}(\nu(x))-h_{\Omega}(\nu(x)))G(x)d\mathcal{H}^{n-1}(x).
\end{aligned}
\end{equation}
It follows from \eqref{a26},\eqref{a27},\eqref{a28} and \eqref{a29} that
\begin{equation*}
\frac{d}{dt}\text{Cap}_{1}((1-t)\Omega+tL)^{\frac{1}{n-1}}\big{|}_{t=0^{+}}
=\frac{\text{Cap}_{1}(\Omega)^{\frac{1}{n-1}}}{M(\Omega)}\frac{d}{dt}\text{M}((1-t)\Omega+tL)\big{|}_{t=0^{+}},
\end{equation*}
which implies
\begin{equation*}
\frac{d}{dt}\frac{\text{Cap}_{1}((1-t)\Omega+tL)^{\frac{1}{n-1}}}{\text{M}((1-t)\Omega+tL)}\big{|}_{t=0^{+}}=0.
\end{equation*}
Thus, $\Omega$ is the stationary domain of the functional $\frac{\text{Cap}_{1}(\Omega)^{\frac{1}{n-1}}}{M(\Omega)}$. Since $\text{Cap}_{1}(\Omega)$ is a Brunn-Minkowski functional, then it follows from Lemma \ref{A5} that $\Omega$ is a ball.
\end{proof}

\section*{acknowledgement} The authors thank Prof.Yong Huang for his valued advices and comments on this subject.

\end{document}